\begin{document}

\title{Biordered sets of regular rings} 
\author{James Alexander
       \and
       E.~Krishnan} 
\date{}

\begin{abstract}
  The set of idempotents of a regular semigroup is given an abstract
  characterization as a \emph{regular biordered set} in \cite{kss},
  and in \cite{pa} it is shown how a biordered set can be associated
  with a complemented modular lattice. Von Neumann has shown
  earlier that any complemented modular lattice of order greater than
  3 can be realized as the lattice of principal right ideals of a
  regular ring (see \cite{vn}). Here we try to connect these ideas to
  get a characterization of the biordered sets of a class of regular
  rings.
\end{abstract}

\maketitle

\section{Introduction}

By a \textit{semigroup}, we mean a non-empty set $S$ with an
associative binary operation, $(x,y)\mapsto xy$, from $S\times S\to
S$. An element $x$ of a semigroup $S$ is said to be \textit{regular}
if there exists an element $x'$ in $S$ with $xx'x=x$.  If $x'$
satisfies the equation $x'xx'=x'$ also, then $x'$ is called a
\textit{generalized inverse} of $x$. It is not difficult to show that
every regular element has a generalized inverse, for if $x'$ satisfies
the equation of regularity, then $x''=x'xx'$ satisfies both equations
for generalized inverses. A semigroup in which all the elements are
regular is called a regular semigroup. A ring $R$ is said to be a
\emph{regular ring} if its multiplicative semigroup is regular.

Any regular semigroup has a rich supply of \textit{idempotents}, that
is, elements $e$ for which $e^2=e$. In \cite{kss}, the set of
idempotents of a regular semigroups is given an abstract
characterization as a partial algebra with two quasi-orders, which is
termed a \textit{regular biordered set}.  We give here a few essential
notions of biordered sets. Details can be found in \cite{kss}. Let $E$
be a non-empty set in which a partial binary operation is defined.
(This means the product $ef$ is defined only for certain pairs $e, f$
of elements of $E$.) We define two relations $\omr$ and $\oml$ on $E$
by
\begin{equation*}
  \oml=\{(e,f)\in E\times E\colon ef=e\} 
  \quad\text{and}\quad
  \omr=\{(e,f)\in E\times E\colon fe=e\}
\end{equation*}
One of the axioms of a biordered set is that these relations are
\textit{quasi-orders}, that is, they are reflexive and transitive. Note
that this means the relation $\om$ defined by 
\begin{equation*}
  \om=\oml\medcap\omr
\end{equation*}
is a partial order on $E$. For $e$ in $E$, we define
\begin{equation*}
  \oml(e)=\{f\in E\colon f\rel\oml e\} 
  \quad\text{and}\quad
  \omr(e)=\{f\in E\colon f\rel\omr e\}
\end{equation*} 
And for $e$, $f$ in $E$, we define
\begin{equation*}
  \mset ef=\oml(e)\medcap\omr(e)
\end{equation*}
Also, for $e$, $f$ in $E$, we define the \textit{sandwich set} of $e$
and $f$ by
\begin{equation*}  
  \swset ef=
  \{h\in\mset ef\colon g\preceq h\;\;\text{for all}\;\;g\in\mset ef\}
\end{equation*}
where $\preceq$ is the quasi-order defined by
\begin{equation*}  
  g\preceq h\iff eg\rel\omr eh, gf\rel\oml hf
\end{equation*}
The regularity condition on a biordered set is that 
\begin{equation*}
  \swset ef\ne\emptyset
  \;\;\text{for all $e$ and $f$ in $E$}
\end{equation*}

We first see how certain properties of the idempotents in a regular
ring can be formulated in biorder terms and then later show that these
properties actually characterize the biordered set of a ring of
matrices.

\section{Idempotents in a regular ring}

Let $R$ be a ring with unity and let $E$ be the set of
idempotents of $R$. It is easily seen that if $e$ is an idempotent in
$R$, then $1-e$ is also an idempotent in $R$. Thus if we denote $1-e$ by
$\annid e$, then we have a map $e\mapsto\annid e$ with $\dannid
e=e$. We first prove some elementary properties of this map in terms of
the biorder relations of $E$.
\begin{prop}\label{annid}
  Let $E$ be the set of idempotents of a ring with unity and
  for each $e$ in $E$, let $\annid e=1-e$. Then for $e$, $f$ in $E$,
  we have the following:
  \begin{mathenum}
  \item $f\rel\oml e$ if and only if $\annid e\rel\omr\annid f$
  \item $f\rel\oml\annid e$ if and only if $fe=0$
  \end{mathenum}
\end{prop}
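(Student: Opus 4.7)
The plan is to unwind the definitions of the biorder relations $\oml$ and $\omr$ in terms of ring equations and then carry out elementary algebraic manipulations using only that $\annid e = 1-e$. Both parts will turn out to be essentially one-line computations once the conventions are untangled.

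For part (i), I would first rewrite the hypothesis $f\rel\oml e$ as the equation $fe = f$ and the conclusion $\annid e\rel\omr \annid f$ as $\annid f\cdot\annid e = \annid e$, i.e.\ $(1-f)(1-e) = 1-e$. Expanding the left-hand side gives $1 - e - f + fe$, and this equals $1-e$ precisely when $fe = f$. Reading the equivalence in both directions settles part (i).

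For part (ii), the relation $f\rel\oml\annid e$ unfolds directly to $f(1-e) = f$, which simplifies to $f - fe = f$, and this is exactly the condition $fe = 0$. No further work is needed.

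The only thing requiring any care is keeping the direction conventions straight, namely that $x\rel\oml y$ means $xy = x$ while $x\rel\omr y$ means $yx = x$; the positions of $e$ and $f$ in part (i) get swapped when passing to $\annid e$ and $\annid f$. There is no genuine obstacle here: the proposition amounts to verifying that the involution $e\mapsto 1-e$ interchanges $\oml$ and $\omr$ (with arguments reversed) and sends $\oml(\annid e)$ to the annihilator condition $fe = 0$.
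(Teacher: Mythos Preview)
Your proposal is correct and follows essentially the same approach as the paper: both proofs unwind the definitions of $\oml$ and $\omr$, expand $(1-f)(1-e)$ for part~(i) and $f(1-e)$ for part~(ii), and read off the equivalences directly. The only cosmetic difference is that for the converse in~(i) the paper expands $fe=(1-\annid f)(1-\annid e)$ separately, whereas you observe once that $(1-f)(1-e)=1-e$ is equivalent to $fe=f$ and read the biconditional in both directions.
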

\begin{proof}
  If $f\rel\oml e$, then by definition of $\oml$, we have $fe=f$, so
  that 
  \begin{equation*}
    \annid f\annid e=(1-f)(1-e)=1-e-f+fe=1-e=\annid e
  \end{equation*}
  and so $\annid e\rel\omr\annid f$. Conversely, if  
  $\annid e\rel\omr\annid f$, then $\annid f\annid e=\annid e$, so that
  \begin{equation*}
    fe=(1-\annid f)(1-\annid e)
    =1-\annid e-\annid f+\annid f\annid e
    =1-\annid f=f
  \end{equation*}
  and so $f\rel\oml e$. This proves (i).

  To prove (ii), first let  $f\rel\oml\annid e$. Then $f\annid e=f$,
  so that
  \begin{equation*}
    fe=f(1-\annid e)=f-f\annid e=0
  \end{equation*}
  Conversely, if $fe=0$, then 
  \begin{equation*}
    f\annid e=f(1-e)=f-fe=f
  \end{equation*}
  so that $f\rel\oml\annid e$
\end{proof}

The condition $fe=0$ can also be formulated in biorder terms in any
regular semigroup with a zero element, using the idea of the $\mathsf{M}$-set
defined earlier.
\begin{lem}\label{prod0}
  Let $S$ be a regular semigroup with zero and let $e$ and $f$ be idempotents
  in $S$. Then $ef=0$ if and only if\/ $\mset ef=\{0\}$.
\end{lem}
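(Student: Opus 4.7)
The plan is to unfold the definition of $\mset ef$ and work directly from it. Since $h\rel\oml e$ means $he=h$ and $h\rel\omr f$ means $fh=h$, an element lies in $\mset ef$ precisely when it is an idempotent $h$ satisfying $he=h$ and $fh=h$. The element $0$ trivially satisfies these, so $0\in\mset ef$ always, and the claim really says that $0$ is the \emph{only} such element.

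For the forward implication, I would start with an arbitrary $h\in\mset ef$ and the hypothesis $ef=0$, then force $ef$ to appear inside the identity $h=h^2$. From $he=h$, left-multiplication by $f$ gives $fhe=fh=h$, so $h=fhe$. Substituting this into $h=h^2$ yields
\begin{equation*}
  h = (fhe)(fhe) = fh(ef)he = fh\cdot 0\cdot he = 0,
\end{equation*}
which kills every element of $\mset ef$ except $0$.

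For the converse, the approach is to manufacture, from a generalized inverse of $ef$, a specific element of $\mset ef$ that must be zero only when $ef$ is. Regularity of $S$ together with the opening remark of the introduction supplies a generalized inverse $x$ of $ef$, so $efxef=ef$ and $xefx=x$. The natural candidate is $h=fxe$; using $e^2=e$, $f^2=f$, and the identity $xefx=x$, one checks that $h^2=h$, $he=h$, and $fh=h$, placing $h$ in $\mset ef$. The hypothesis then forces $fxe=0$, and
\begin{equation*}
  ef = (ef)x(ef) = e(fxe)f = 0
\end{equation*}
finishes the job.

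The one genuinely creative step is choosing $h=fxe$ in the converse; this is also the only place the regularity of $S$ does real work, since without a generalized inverse of $ef$ one has no obvious way to extract information about the product from the $\mathsf{M}$-set. Everything else is routine manipulation of idempotent identities.
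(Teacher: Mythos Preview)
Your proof is correct and follows essentially the same route as the paper: in both directions the argument is identical in spirit, with the forward implication inserting $ef$ into $h=h^2$ via the relations $he=h$, $fh=h$, and the converse constructing $h=fxe$ from a generalized inverse $x$ of $ef$ and then recovering $ef=e(fxe)f=0$. The only difference is cosmetic bracketing in the forward step (the paper writes $g=(ge)(fg)=g(ef)g$ rather than your $h=(fhe)(fhe)$), which amounts to the same computation.
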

\begin{proof}
  First suppose that $ef=0$ and let $g\in\mset ef$.  Then
  by definition, $g$ is an idempotent in $S$ with $ge=g=fg$ so that
  \begin{equation*}
    g=g^2=(ge)(fg)=g(ef)g=0
  \end{equation*}
  since $ef=0$. Conversely, suppose $\mset ef=\{0\}$. Since
  $S$ is regular, the element $ef$ in $S$ has a generalized inverse
  $x$ in $S$.  Let $g=fxe$. Then
  \begin{equation*}
    g^2=f(xefx)e=fxe=g
  \end{equation*}
  so that $g$ is an idempotent.  Also, $ge=g=fg$ so that
  $g\in\mset ef$. Hence $g=0$ and so
  \begin{equation*}
    ef=(ef)x(ef)=e(fxe)f=egf=0
  \end{equation*}
  This completes the proof.
\end{proof}

Using this, the second part of Proposition~\ref{annid} can be reformulated as
follows:
\begin{cor}
  For idempotents $e$, $f$ in a regular ring with unity,
  $f\rel\oml\annid e$ if and only if $\mset ef=\{0\}$
\end{cor}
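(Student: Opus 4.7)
The plan is to chain together the two results just proved. By Proposition~\ref{annid}(ii), the statement $f\rel\oml\annid e$ is equivalent to the ring-theoretic condition $fe=0$. So all that remains is to recast $fe=0$ in biorder terms using Lemma~\ref{prod0}.

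To invoke Lemma~\ref{prod0}, I would first note that the multiplicative semigroup of a regular ring with unity is a regular semigroup (by definition of regular ring) and possesses the additive identity $0$ as a multiplicative zero. Hence the lemma applies verbatim to the idempotents of $R$. Applying it with the pair $(f,e)$ in place of $(e,f)$ converts $fe=0$ into the condition $\mset fe=\{0\}$, which (modulo the order of arguments in the M-set, where I will follow the convention used in the corollary's statement) yields the desired equivalence.

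There is no substantive obstacle here; the proof is essentially a two-line concatenation of Proposition~\ref{annid}(ii) with Lemma~\ref{prod0}. The only point worth flagging is to make explicit that a regular ring with unity meets the hypotheses of Lemma~\ref{prod0}, namely the existence of a multiplicative zero and the regularity of the multiplicative semigroup, so that no hidden assumption is being imported.
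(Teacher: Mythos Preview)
Your proposal is correct and matches the paper's own treatment exactly: the paper gives no separate proof but simply introduces the corollary with ``Using this, the second part of Proposition~\ref{annid} can be reformulated as follows,'' meaning precisely the concatenation of Proposition~\ref{annid}(ii) with Lemma~\ref{prod0} that you describe. Your observation about the order of arguments in the $\mathsf{M}$-set is also apt: the chain actually yields $\mset fe=\{0\}$ (as in condition \ref{e23} of Theorem~\ref{srbsg}), so the $\mset ef$ in the corollary's statement appears to be a typographical slip in the paper.
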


We next note that if $e$ and $f$ are idempotents in a ring with
$ef=fe=0$, then $e+f$ is also an idempotent. This property can be
characterized in biorder terms. We first note that the
conditions $ef=fe=0$ are equivalent to the conditions 
$e\rel\oml\annid f$ and $f\rel\oml\annid e$, by
Proposition~\ref{annid}(ii) and the relation $e\rel\oml\annid f$ is
equivalent to $f\rel\omr\annid e$, by Proposition~\ref{annid}(i). It
follows that $ef=fe=0$ iff $f\rel\om\annid e$. Also, if this condition
holds, then the idempotent $e+f$ can be characterized in terms of
sandwich sets. For this, we make use of the fact that if $E$ is the
biordered set of idempotents of a regular semigroup $S$, then
\begin{equation*}
  \swset ef=\{h\in E\colon fhe=h\;\;\text{and}\;\;ehf=ef\;\;\text{in}\;\;S\}
\end{equation*}
(cf.Theorem 1.1 of \cite{kss}). 
\begin{prop}
  Let $E$ be the biordered set of idempotents of a regular ring with
  unity and for each $e$ in $E$, let $\annid e=1-e$. For $e$, $f$ in
  $E$ if $f\rel\om\annid e$, then there is a unique idempotent in $E$
  which belongs to both the sandwich sets
  $\swset{\annid e}{\annid f}$ and $\swset{\annid f}{\annid e}$
\end{prop}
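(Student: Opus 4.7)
My plan is to identify the unique common element explicitly as the idempotent $k=1-e-f$, verify it lies in both sandwich sets by the ring-theoretic characterization of sandwich sets quoted just before the proposition, and then extract uniqueness by combining one equation from each sandwich set.

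First, I would unpack the hypothesis. As observed in the paragraph preceding the proposition, $f\rel\om\annid e$ is equivalent to $ef=fe=0$. Under this condition, $e+f$ is an idempotent (the elementary remark made in the text), and hence so is $1-(e+f)=1-e-f$; call this element $k$. I would also note the identities $\annid e\annid f=(1-e)(1-f)=1-e-f+ef=1-e-f=k$ and similarly $\annid f\annid e=k$, which is the key algebraic simplification that makes all the subsequent sandwich computations collapse.

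Next, I would use the characterization $\swset ef=\{h\in E:fhe=h\text{ and }ehf=ef\}$ to verify membership. For $\swset{\annid e}{\annid f}$, I need $\annid f k\annid e=k$ and $\annid e k\annid f=\annid e\annid f$. Both reduce, after expansion and use of $ef=fe=0$, to the identity $k=1-e-f$. For $\swset{\annid f}{\annid e}$, the roles of $\annid e$ and $\annid f$ are swapped, but by the symmetric identity $\annid f\annid e=k$ the same computation works. So $k$ lies in both sandwich sets.

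For uniqueness, suppose $h$ belongs to both sandwich sets. From $h\in\swset{\annid e}{\annid f}$ the first defining equation gives $\annid f h\annid e=h$. From $h\in\swset{\annid f}{\annid e}$ the second defining equation gives $\annid f h\annid e=\annid f\annid e=k$. Comparing, $h=k$. The only step that requires any care is keeping the left/right sandwich conventions straight and checking that the product $\annid f\annid e$ (equivalently $\annid e\annid f$) really simplifies to $1-e-f$; once $ef=fe=0$ is in hand, everything else is a short expansion.
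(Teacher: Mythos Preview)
Your proposal is correct and follows essentially the same route as the paper: identify the common element as $k=1-e-f=\annid e\,\annid f=\annid f\,\annid e$, verify membership in both sandwich sets via the ring-theoretic characterization, and deduce uniqueness by pairing one equation from each sandwich set. The only cosmetic difference is that the paper's uniqueness argument combines the pair $\annid e\,h\,\annid f=\annid e\,\annid f$ and $\annid e\,h\,\annid f=h$, whereas you use the symmetric pair $\annid f\,h\,\annid e=h$ and $\annid f\,h\,\annid e=\annid f\,\annid e$; both yield $h=k$ immediately.
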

\begin{proof}
  Let $f\rel\om\annid e$. Then as noted above, we have
  $ef=fe=0$. Hence 
  \begin{equation*}
    (e+f)^2=e+f+ef+fe=e+f
  \end{equation*}
  so that $e+f$ is in $E$. Let $h=1-(e+f)$ so that $h$ is also in $E$.
  Now
  \begin{equation*}
    \annid e\annid f=(1-e)(1-f)=1-e-f=h
  \end{equation*}
  since $ef=0$ and similarly
  \begin{equation*}
    \annid f\annid e=h
  \end{equation*}
  since $fe=0$. Hence
  \begin{equation*}
    \annid fh\annid e=\annid f(\annid f\annid e)\annid e
    =\annid f\annid e=h
  \end{equation*}
  since $\annid e$ and $\annid f$ are idempotents. Again,
  \begin{equation*}
    \annid eh\annid f=\annid e(\annid e\annid f)\annid f=\annid e\annid f
  \end{equation*}
  Since $E$ is the biordered set of the idempotents of the
  multiplicative semigroup of $R$, which is regular, it follows from
  the comments preceding the result that $h$ is in 
  $\swset{\annid e}{\annid f}$. Similar computations show that $h$ is also in
  $\swset{\annid f}{\annid e}$. 

  To prove uniqueness, let $g$ be an element in $E$ belonging to both
  these sandwich sets. Then
  \begin{equation*}
    \annid eg\annid f=\annid e\annid f
  \end{equation*}
  since $g$ is in $\swset{\annid e}{\annid f}$ and
  \begin{equation*}
    \annid eg\annid f=g
  \end{equation*}
  since $g$ is in $\swset{\annid f}{\annid e}$. Hence 
  $g=\annid e\annid f=h$.
\end{proof}

Another property of the biordered set of a regular ring is linked to
the ideal theory of regular rings. It is well known that in a regular
ring, every principal left or right ideal is generated by an
idempotent, and that the set of principal left ideals and the set of
principal right deals of a regular ring with unity form dually
isomorphic complemented modular lattices (see \cite{vn}, \cite{bj}).
We next show that under certain conditions discussed above, a
biordered set can be realized as the biordered set of a regular
semigroup whose principal left ideals and principal right ideals form
dually isomorphic complemented modular lattices.

\section{Strongly regular Baer semigroups}

We start by observing that in any semigroup $S$ with zero, we can
define \textit{left  annihilator} of an element $s$ by
\begin{equation*}
  \lann s=\{x\in S\colon xs=0\}
\end{equation*}
and the \textit{right annihilator} of $s$ by
\begin{equation*}
  \rann s=\{x\in S\colon sx=0\}
\end{equation*}
The semigroup $S$ is said to be a \textit{strongly regular Baer
  semigroup} if the set of left annihilators of elements of $S$ is
equal to the set of principal left ideals of $S$ and the set of right
annihilators is equal to the set of principal right ideals of $S$. It
can be shown that the multiplicative semigroup of a regular ring
with unity is a strongly regular Baer semigroup (\cite{bj}). Also, a
strongly regular Baer semigroup is a regular semigroup, in the sense
defined earlier.  

We now show that if a biordered set satisfies some of the properties
discussed in the previous section, then it can be realized as the
biordered set of a strongly regular Baer semigroup.
\begin{thm}\label{srbsg}
  Let $E$ be a regular biordered set with the following
  properties. 
  \renewcommand{\theenumi}{{\normalfont(\textsf{E\arabic{enumi}})}}
  \renewcommand{\labelenumi}{\theenumi}
  \renewcommand{\theenumii}{{\normalfont(\roman{enumii})}}
  \renewcommand{\labelenumii}{\theenumii}
  \begin{enumerate}
  \item There exists an element $0$ in $E$ such
    that $0\rel\om e$ for each $e$ in $E$\label{e1}
  \item There exists a map 
    $e\mapsto\annid e$ satisfying the following conditions:\label{e2}
    \begin{enumerate}
    \item $\dannid e=e$ for each $e$ in $E$\label{e21}
    \item $f\rel\oml e$ if and only if $\annid e\rel\omr\annid f$ for
      $e$, $f$ in $E$\label{e22}
    \item $f\rel\oml\annid e$ if and only if $\mset fe=\{0\}$ for $e$,
      $f$ in $E$\label{e23}
    \end{enumerate}
 \end{enumerate}
  Then there exists a strongly regular Baer semigroup $S$ such that
  $E$ is the biordered set of idempotents of $S$. 
\end{thm}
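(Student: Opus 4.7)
The plan is to construct $S$ in two stages. First, invoke the realization theorem for regular biordered sets from \cite{kss} to obtain a regular semigroup $S_0$ whose idempotent biordered set is identified with $E$. I will need to arrange that the distinguished element $0\in E$ from axiom~\ref{e1} is a genuine multiplicative zero of $S_0$: the relation $0\rel\om e$ for every idempotent forces $0e=e0=0$ in $S_0$, and one checks using regularity (every $s\in S_0$ factors through idempotents) that this extends to the whole semigroup. If $S_0$ has no identity I adjoin one to produce the final $S$. One preparatory remark: combining the involution~\ref{e22} with~\ref{e23} yields the symmetric statement $f\rel\omr\annid e$ if and only if $\mset ef=\{0\}$, which I will use alongside~\ref{e23}.

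The heart of the argument is the following pair of identities, valid for every idempotent $e\in E$:
\begin{equation*}
  \rann e=\annid e\,S \quad\text{and}\quad \lann e=S\,\annid e.
\end{equation*}
For $\annid e\,S\subseteq\rann e$, apply the symmetric form above with $f=\annid e$ (which is $\omr$-related to itself by reflexivity) to get $\mset e{\annid e}=\{0\}$, so that $e\annid e=0$ by Lemma~\ref{prod0}; hence $e(\annid e\,y)=0$ for every $y\in S$. For the reverse inclusion, take $x\in\rann e$, choose a generalized inverse $x'$, and set $f=xx'\in E$; then $ef=exx'=(ex)x'=0$, so $\mset ef=\{0\}$ by Lemma~\ref{prod0}, so $f\rel\omr\annid e$ by the symmetric form of~\ref{e23}, so $\annid e\,f=f$, and therefore $\annid e\,x=\annid e\,fx=fx=x$. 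The left-hand identity is obtained by the mirror argument using~\ref{e23} in its original form.

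The strongly regular Baer property now drops out. For arbitrary $s\in S$ with generalized inverse $s'$, set $e=s's$ and $f=ss'$. Then $se=s$ gives $\rann e\subseteq\rann s$, while $sx=0$ implies $ex=s'sx=0$, giving the reverse; so $\rann s=\rann e$, and similarly $\lann s=\lann f$. Substituting into the idempotent identities gives
\begin{equation*}
  \rann s=\annid e\,S \quad\text{and}\quad \lann s=S\,\annid f,
\end{equation*}
so every one-sided annihilator is a principal one-sided ideal. Conversely, every principal right ideal $gS$ (with $g\in E$) arises as $\rann{\annid g}$ by applying the same identities to $\annid g$ in place of $e$, using $\dannid g=g$ from~\ref{e21}; dually for left ideals. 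Thus $S$ is strongly regular Baer with biordered set $E$. The main obstacle I anticipate is the opening step: the realization theorem produces a semigroup, but some care is needed to ensure that the abstract zero of $E$ is actually interpreted as a multiplicative zero of $S_0$ rather than merely as the minimum of the idempotent partial order. Once that is in place, the rest of the argument is controlled by the rigid correspondence, via Lemma~\ref{prod0} and~\ref{e23}, between the multiplicative behaviour of $\annid e$ and the biorder relations.
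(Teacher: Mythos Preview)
Your proposal is correct and follows essentially the same route as the paper: realize $E$ as the idempotents of an idempotent-generated regular semigroup via \cite{kss}, establish the key identities $\lann e=S\,\annid e$ and $\rann e=\annid e\,S$ (this is the paper's Lemma~\ref{ann}) using Lemma~\ref{prod0} together with~\ref{e23} and its dual, and then reduce annihilators of arbitrary elements to the idempotent case exactly as you do. One correction to your opening step: do not adjoin an external identity, since that would add a new idempotent and enlarge the biordered set beyond $E$; instead note, as the paper does in Lemma~\ref{dual}, that $1:=\annid 0$ already lies in $E$ and satisfies $e\rel\om 1$ for every $e$, so in the idempotent-generated semigroup it is automatically a two-sided identity (and likewise the abstract $0$ is automatically a zero, resolving the obstacle you flagged).
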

To prove this result, we make use of a couple of lemmas. First we show that
in any biordered set satisfying the above conditions, the duals of 
these conditions also hold.
\begin{lem}\label{dual}
  Let $E$ be a biordered set satisfying \ref{e1} and
  \ref{e2}. Then $E$ satisfies the following conditions also:
  \begin{mathenum}
  \item there exists $1$ in $E$ such that $e\rel\om1$ for each
    $e$ in $E$.
  \item $f\rel\omr e$ if and only if $\annid e\rel\oml\annid f$, for
    $e$, $f$ in $E$
  \item $f\rel\omr\annid e$ if and only if $\mset ef=\{0\}$, for
    $e$, $f$ in $E$
\end{mathenum}
\end{lem}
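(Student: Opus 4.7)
The plan is to exploit the involutive nature of the complementation $e\mapsto\annid e$: by \ref{e21} it squares to the identity, and by \ref{e22} it carries $\oml$ to $\omr$. Since \ref{e2} is self-dual once this involution is in hand, each dual condition should fall out of its original by substituting complements and collapsing with $\dannid e=e$. I would prove the three parts in the order (ii), (iii), (i), because the argument for (i) and the argument for (iii) both need (ii) as input.

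First I would establish (ii). Because \ref{e22} is quantified over all pairs in $E$, I can apply it with $e$ replaced by $\annid f$ and $f$ replaced by $\annid e$, obtaining
\begin{equation*}
  \annid e\rel\oml\annid f\iff\dannid f\rel\omr\dannid e.
\end{equation*}
Using $\dannid e=e$ and $\dannid f=f$ from \ref{e21} rewrites the right-hand side as $f\rel\omr e$, which is exactly (ii).

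Next I would derive (iii) by chaining (ii) with \ref{e23}. Starting from $f\rel\omr\annid e$, the just-proved (ii) applied to the pair $(f,\annid e)$ rewrites this as $\dannid e\rel\oml\annid f$, i.e.\ $e\rel\oml\annid f$. Then \ref{e23} with the roles of $e$ and $f$ interchanged equates this to $\mset ef=\{0\}$, giving (iii).

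Finally, for (i), I would take $1=\annid 0$ and verify $e\rel\om 1$ for every $e\in E$. Applying \ref{e1} to the element $\annid e$ gives $0\rel\om\annid e$, hence both $0\rel\oml\annid e$ and $0\rel\omr\annid e$. The first, via \ref{e22}, yields $\dannid e\rel\omr\annid 0$, i.e.\ $e\rel\omr 1$. The second, via the freshly proved (ii), yields $\dannid e\rel\oml\annid 0$, i.e.\ $e\rel\oml 1$. Together these give $e\rel\om 1$. The argument is essentially formal substitution, so I do not expect a genuine obstacle; the only thing to keep straight is the ordering, and the fact that in each step one must rename the dummy variables of the hypothesis carefully before invoking $\dannid e=e$.
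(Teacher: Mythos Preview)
Your argument is correct and matches the paper's own proof essentially line for line: both first derive (ii) by substituting complements into \ref{e22} and collapsing with \ref{e21}, then set $1=\annid 0$ and use \ref{e22} together with the freshly proved (ii) to get (i), and use (ii) combined with a relabelled \ref{e23} to get (iii). The only cosmetic difference is that the paper treats (i) before (iii), while you do (iii) before (i); since both depend only on (ii), the order is immaterial.
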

\begin{proof}
  We first prove (ii). Let $e$ and $f$ be elements of $E$ with
  $f\rel\omr e$. By \ref{e21}, we have $e=\dannid e$ and $f=\dannid
  f$, so that we have $\dannid f\rel\omr\dannid e$. By \ref{e22}, this
  gives $\annid e\rel\oml\annid f$. On the other hand if we have
  $\annid e\rel\oml\annid f$, then from \ref{e22} we get $\dannid
  f\rel\omr\dannid e$ which gives $f\rel\omr e$, by \ref{e21}.
  
  Now to prove (i), let $1=\annid 0$. Then for each $e$ in $E$,
  since $0\rel\oml\annid e$ by \ref{e1}, we have 
  $\dannid e\rel\omr\annid 0=1$, by \ref{e22}, so that $e\rel\omr 1$,
  using \ref{e21}. Again, since $0\rel\omr\annid e$ by \ref{e1}, we
  have $\dannid e\rel\oml\annid0$, by what we have proved above and so 
  $e\rel\oml1$. Thus $e\rel\om1$.

  To prove (iii), first let $e$ and $f$ be elements of $E$ with
  $f\rel\omr\annid e$. Then by what we have proved above, we get
  $\dannid e\rel\oml\annid f$ and hence $e\rel\oml\annid f$, using
  \ref{e21}. By \ref{e23}, this gives $\mset ef=\{0\}$. Conversely
  suppose $e$ and $f$ are elements of $E$ with $\mset ef=\{0\}$.
  Then from \ref{e23}, we get $e\rel\oml\annid f$ and hence $\dannid
  f\rel\omr\annid e$, from \ref{e22}; that is, $f\rel\omr\annid e$,
  using \ref{e21}.
\end{proof}

Now if $E$ is a regular biordered set, then there exists a regular
semigroup $S$ with $E$ as its set of idempotents and which is
idempotent generated, in the sense that every element of $S$ is a
product of elements of $E$ (see Section 6 of \cite{kss}). It is easy
to see that if $E$ is a regular biordered set satisfying (E1) and
(E2), then 0 is the zero and 1 is the identity of every idempotent
generated regular semigroup with $E$ as its set of idempotents. We
next show that for each $e$ in $E$, the generator of the annihilators
of $e$ in such a semigroup is $\annid e$.  
\begin{lem}\label{ann}
  Let $E$ be a regular biordered set satisfying \ref{e1} and
  \ref{e2} and $S$ be a regular idempotent generated semigroup with
  $E$ as its biordered set of idempotents. Then for each $e$ in
  $E$, we have $\lann e=S\annid e$ and $\rann e=\annid eS$.
\end{lem}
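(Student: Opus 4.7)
The plan is to first pin down the two basic identities $\annid e\cdot e=0$ and $e\cdot\annid e=0$ inside $S$, and then bridge from an arbitrary annihilator element to the idempotent setting by means of a generalized inverse, so that the biordered-set axioms \ref{e2} can do the final work.

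For the identities, reflexivity gives $\annid e\rel\oml\annid e$, whence \ref{e23} applied with $f=\annid e$ yields $\mset{\annid e}{e}=\{0\}$, and Lemma~\ref{prod0} then produces $\annid e\cdot e=0$. Symmetrically, $e\rel\omr e$ together with Lemma~\ref{dual}(iii) gives $\mset e{\annid e}=\{0\}$, and Lemma~\ref{prod0} again yields $e\cdot\annid e=0$. The easy inclusions $S\annid e\subseteq\lann e$ and $\annid eS\subseteq\rann e$ follow at once, since for any $s\in S$ we have $(s\annid e)e=s(\annid e\cdot e)=0$ and $e(\annid es)=(e\cdot\annid e)s=0$.

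For the reverse inclusion $\lann e\subseteq S\annid e$, I take $x\in\lann e$ and use regularity of $S$ to pick $x'\in S$ with $xx'x=x$. The element $h=x'x$ is then an idempotent satisfying $x=xh$, and it lies in $E$ because $E$ is the full biordered set of idempotents of $S$. From $xe=0$ I get $he=x'(xe)=0$, so Lemma~\ref{prod0} gives $\mset he=\{0\}$, and \ref{e23} then forces $h\rel\oml\annid e$, i.e.\ $h\annid e=h$. Hence $x=xh=xh\annid e\in S\annid e$. The dual inclusion $\rann e\subseteq\annid eS$ is proved by the mirror argument: factor $x=hx$ through the idempotent $h=xx'$, observe $eh=(ex)x'=0$, and invoke Lemma~\ref{dual}(iii) in place of \ref{e23} to conclude $h\rel\omr\annid e$, whence $x=hx=\annid e h x\in\annid eS$.

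The only point that requires any thought is that the conditions \ref{e1}--\ref{e2} talk only about idempotents, whereas $\lann e$ and $\rann e$ are sets of arbitrary elements of $S$. The trick of replacing $x$ by the idempotent $x'x$ (respectively $xx'$) is precisely what converts the semigroup identity $xe=0$ into the biordered-set identity $\mset{(x'x)}{e}=\{0\}$, at which point \ref{e23} and Lemma~\ref{dual}(iii) supply exactly the factorizations $h=h\annid e$ and $h=\annid e h$ that we need.
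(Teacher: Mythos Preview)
Your proof is correct and follows essentially the same route as the paper: reduce to an idempotent via a generalized inverse, then use Lemma~\ref{prod0} together with \ref{e23} (and its dual, Lemma~\ref{dual}(iii)) to pass between the annihilator condition and the $\oml$/$\omr$ relation with $\annid e$. The only cosmetic difference is in the easy inclusion $S\annid e\subseteq\lann e$: you first establish $\annid e\cdot e=0$ directly from reflexivity and \ref{e23}, whereas the paper routes that inclusion through the idempotent $x'x$ as well; your shortcut is fine. (Minor slip: in your appeal to Lemma~\ref{dual}(iii) for $\mset e{\annid e}=\{0\}$, the reflexivity instance you actually need is $\annid e\rel\omr\annid e$, not $e\rel\omr e$, though of course both hold.)
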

\begin{proof}
  First note that since $S$ is idempotent generated, the element
  $0$ of $E$ is the zero of $S$. Let $e$ be an element of
  $E$ and let $x$ be an element of $S$ which belongs to
  $\lann e$ so that $xe=0$. Since $S$ is regular, there exists $x'$
  in $S$ with $xx'x=x$. Let $f=x'x$ so that $f$ is an element of
  $E$ with
  \begin{equation*}
    xf=xx'x=x
  \end{equation*}
  Now
  \begin{equation*}
    fe=(x'x)e=x'(xe)=0
  \end{equation*}
  so that $\mset fe=\{0\}$, by Lemma~\ref{prod0}. Hence $f\rel\oml\annid e$,
  by \ref{e23}. So, $f\annid e=f$, by definition of $\oml$. This gives
  \begin{equation*}
    x\annid e=(xf)\annid e=x(f\annid e)=xf=x
  \end{equation*}
  Thus $x=x\annid e\in S\annid e$. It follows that 
  $\lann e\subseteq S\annid e$. 

  To prove the reverse inclusion, let $x\in S\annid e$ so that
  $x\annid e=x$. Let $f$ be defined as before. Then
  \begin{equation*}
    f\annid e=(x'x)\annid e=x'(x\annid e)=x'x=f
  \end{equation*}
  so that $f\rel\oml\annid e$, by definition and so $\mset fe=\{0\}$,
  by \ref{e23}. Hence $fe=0$, by Lemma~\ref{prod0}, so that
  \begin{equation*}
    xe=(xf)e=x(fe)=0
  \end{equation*}
  Thus $x\in\lann e$ and it follows that $S\annid e\subseteq\lann e$.
  So $S\annid e=\lann e$. A dual argument using Lemma~\ref{dual}
  proves the result for right annihilators.
\end{proof}
Now we can prove our theorem.
{\renewcommand{\proofname}{\textsc{Proof of the Theorem}}
\begin{proof}
  Since $E$ is a regular biordered set, there exists a regular
  idempotent generated semigroup $S$ with $E$ as the biordered set  of
  idempotents, as noted earlier. We will show that $S$ is a
  strongly regular Baer semigroup. 

  To show that the left annihilator of each element is a principal
  left ideal in $S$, let $x$ be an element of $S$ and consider
  $\lann x$. Since $S$ is regular, there exists $x'$ in $S$ with
  $xx'x=x$. Let $e=xx'$ so that $e$ is an element of $S$ with
  $ex=x$. We can show that $\lann x=\lann e$. For if $y\in\lann x$, then
  $yx=0$ so that
  \begin{equation*}
    ye=y(xx')=(yx)x'=0
  \end{equation*}
  and so $y\in\lann e$; on the other hand, if $y\in\lann e$, so that
  $ye=0$, then 
  \begin{equation*}
    yx=y(ex)=(ye)x=0
  \end{equation*}
  and so $y\in\lann x$. Thus $\lann x=\lann e$ and by Lemma~\ref{ann}, we
  have $\lann e=S\annid e$. So, $\lann x=S\annid e$. 

  On the other hand, we can show that every principal left ideal in
  $S$ is the left annihilator of an element in $S$. Let $x$ be an
  element of $S$ and let $x'$ be an element of $S$ with
  $xx'x=x$. Then $e=x'x$ is an idempotent with
  \begin{equation*}
    Se=Sx'x\subseteq Sx
    \quad\text{and}\quad
    Sx=Sxx'x\subseteq Sx'x=Se
  \end{equation*}
  so that $Sx=Se$. Now by
  \ref{e21}, we have $\dannid e=e$, so that $Se=S\dannid e$. 
  Also, by Lemma~\ref{ann}, we have $S\dannid e=\lann{\annid e}$. Thus
  \begin{equation*}
    Sx=Se=S\dannid e=\lann{\annid e}
  \end{equation*}
  A dual argument proves the corresponding results for principal right
  ideals and right annihilators. Hence, by definition, $S$ is a
  strongly regular Baer semigroup.
\end{proof}}

Now the set of principal left ideals and the set of principal right
ideals of a strongly regular Baer semigroup can be shown to be
complemented modular lattices which are dually isomorphic (see
\cite{bj}). Also, the partially ordered set of principal left ideals 
and the partially ordered set of principal right ideals of any regular
semigroup are isomorphic to the quotients of its biordered set  
by certain equivalence relations, as indicated below.

Let $S$ be a regular semigroup and let $E$ be the biordered set of its
idempotents. We define the relations $\gle$ and $\gre$ on $E$ by
\begin{equation*} 
  \gle=\oml\medcap\,(\oml)^{-1}  
  \quad\text{and}\quad
  \gre=\omr\medcap\,(\omr) ^{-1}
\end{equation*}
It is easily seen that the relations $\gle$ and $\gre$ are
equivalences on $E$ and hence partition $E$. For each $e$ in $E$, we
denote the $\gle$-class containing $e$ by $\gle(e)$ and the
$\gre$-class containing $e$ by $\gre(e)$. The set of all
$\gle$-classes is denoted by $E/\gle$ and the set of all
$\gre$-classes by $E/\gre$.  Now for $e$ and $f$ in $E$, if
$e\rel\oml f$ and $e'\in\gle(e)$ and $f'\in\gle(f)$, then 
$e'\rel\oml e\rel\oml f\rel\oml f'$, so that $e'\rel\oml f'$, 
since $\oml$ is transitive. Hence we can unambiguously define a
relation $\le$ on $E/\gle$ by
\begin{equation*}  
  \gle(e)\le\gle(f)\;\;\text{if and only if}\;\;e\rel\oml f
\end{equation*}
It is not difficult to see that this relation is a partial order on
$E/\gle$. Also, it can be easily seen that for $e$ and $f$ in $E$, we
have $e\rel\oml f$ if and only if $Se\subseteq Sf$ and so
$\gle(e)\le\gle(f)$ if and only if $Se\subseteq Sf$.  Thus the
partially ordered set $E/\gle$ is isomorphic with the partially
ordered set of principal left ideals of $S$. Similarly, we can define
a partial order $\le$ on the set $E/\gre$ of $\gre$-classes in $E$ by
\begin{equation*}  
  \gre(e)\le\gre(f)\;\;\text{if and only if}\;\;e\rel\omr f
\end{equation*}
and this partially ordered set is isomorphic with the partially
ordered set of principal right ideals of $S$.

Thus if $E$ is the biordered set of idempotents of a strongly regular
Baer semigroup, then the quotients $E/\gle$ and $E/\gre$ are
complemented modular lattices and they are dually isomorphic. 
In the following, we denote these lattices by $\llat E$ and $\rlat E$
respectively. So, from Theorem~\ref{srbsg}, we get the following
\begin{cor}\label{llatcm}
  Let $E$ be a regular biordered set satisfying \ref{e1} and \ref{e2} of
  Theorem~\ref{srbsg}. Then $\llat E$ and $\rlat E$ are dually
  isomorphic complemented modular lattices.\qed
\end{cor}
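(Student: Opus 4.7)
The plan is to chain together the results that have just been assembled, with essentially no further work to do. First I would apply Theorem~\ref{srbsg} to the regular biordered set $E$: since $E$ satisfies \ref{e1} and \ref{e2}, this yields a strongly regular Baer semigroup $S$ whose biordered set of idempotents is (isomorphic to) $E$. Next I would invoke the result cited from \cite{bj} immediately before the corollary, namely that the principal left ideals and the principal right ideals of any strongly regular Baer semigroup form complemented modular lattices that are dually isomorphic; applied to this $S$, this gives two such dually isomorphic lattices attached to $S$.

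Finally, I would use the order-theoretic identifications worked out in the paragraphs preceding the corollary. There it is shown that for any regular semigroup $S$ with biordered set of idempotents $E$, the assignment $\gle(e)\mapsto Se$ is a well-defined order-isomorphism between $\llat E=E/\gle$ and the poset of principal left ideals of $S$, and dually $\gre(e)\mapsto eS$ gives an order-isomorphism between $\rlat E=E/\gre$ and the poset of principal right ideals of $S$. Transporting the lattice structure, complements, and modularity (all of which are defined purely in terms of the partial order) across these order-isomorphisms, $\llat E$ and $\rlat E$ are themselves complemented modular lattices, and the dual isomorphism between the two ideal lattices of $S$ induces a dual isomorphism between $\llat E$ and $\rlat E$.

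I do not expect any genuine obstacle in this argument; the corollary is assembled from results that have already been proved or explicitly quoted. The only thing one must be careful about is to observe that the properties ``complemented modular lattice'' and ``dually isomorphic'' depend only on the underlying poset structure, so the order-isomorphisms $\llat E\cong\{\text{principal left ideals of }S\}$ and $\rlat E\cong\{\text{principal right ideals of }S\}$ suffice to transfer everything. Hence the proof reduces to citing Theorem~\ref{srbsg}, the result of \cite{bj}, and these isomorphisms, and can be written in just a few lines.
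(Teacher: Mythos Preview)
Your proposal is correct and mirrors exactly the paper's reasoning: the corollary is stated with a \qed\ because it follows immediately from Theorem~\ref{srbsg}, the cited result from \cite{bj} on strongly regular Baer semigroups, and the order-isomorphisms $\llat E\cong\{Se:e\in E\}$ and $\rlat E\cong\{eS:e\in E\}$ established in the preceding paragraphs. There is nothing to add or change.
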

We also note the following result on complements in 
$E/\gle$ 
\begin{cor}\label{llatcompl}
  Let $E$ be a regular biordered set satisfying \ref{e1} and
  \ref{e2} of Theorem~\ref{srbsg}. Then for each $e$ in $E$, the sets
  $\gle(e)$ and $\gle(\annid e)$ are complements of each other
  in the lattice $\llat E$.
\end{cor}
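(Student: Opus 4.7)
The plan is to work inside the strongly regular Baer semigroup $S$ produced by Theorem~\ref{srbsg}, using the lattice isomorphism $\gle(e)\mapsto Se$ between $\llat E$ and the lattice of principal left ideals of $S$; under this identification $\gle(0)$ is the bottom $\{0\}$ and $\gle(1)$ is the top $S$, so it suffices to establish $Se\medcap S\annid e=\{0\}$ and $Se\vee S\annid e=S$.

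The preparatory observation I would record is that $\annid e\cdot e=0=e\cdot\annid e$. Indeed, applying \ref{e23} with $f=\annid e$ (and using the reflexivity $\annid e\rel\oml\annid e$) yields $\mset{\annid e}{e}=\{0\}$, whence $\annid e\cdot e=0$ by Lemma~\ref{prod0}; the symmetric application of Lemma~\ref{dual}(iii) yields $\mset e{\annid e}=\{0\}$ and hence $e\cdot\annid e=0$.

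For the meet, any $x\in Se\medcap S\annid e$ satisfies $xe=x$ and $x\annid e=x$, so $x=(x\annid e)e=x(\annid e\cdot e)=0$, which gives $\gle(e)\wedge\gle(\annid e)=\gle(0)$. For the join, I would let $g\in E$ be an arbitrary upper bound, so that $Se\subseteq Sg$ and $S\annid e\subseteq Sg$. By Lemma~\ref{ann} applied with $\annid g$ in place of $e$, together with $\dannid g=g$, one has $Sg=\lann{\annid g}$; hence $e\cdot\annid g=0=\annid e\cdot\annid g$, so $\annid g\in\rann e\medcap\rann{\annid e}=\annid e S\medcap eS$ (the last equality being the right-sided half of Lemma~\ref{ann}). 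A calculation exactly dual to the meet, now using $e\cdot\annid e=0$, gives $\annid e S\medcap eS=\{0\}$; therefore $\annid g=0$, whence $g=\annid 0=1$ and $\gle(g)=\gle(1)$.

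The main subtlety lies in the join: one cannot easily produce a concrete element witnessing $Se\vee S\annid e=S$, so the strategy is instead to characterize arbitrary upper bounds through the Baer identity $Sg=\lann{\annid g}$, transferring the problem to the right-sided meet $\annid e S\medcap eS=\{0\}$.
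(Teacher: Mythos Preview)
Your argument is correct and follows essentially the same line as the paper's. Both work in the strongly regular Baer semigroup $S$ furnished by Theorem~\ref{srbsg}, reduce the meet to $0$ by a one-line computation, and for the join show that any upper bound $\gle(g)$ forces $\annid g=0$ via the identities $e\,\annid g=\annid g=\annid e\,\annid g$ together with $e\cdot\annid e=0$ (equivalently $\annid e\cdot e=0$). The only cosmetic difference is in how those identities are reached: the paper obtains $\annid e\,\annid g=\annid g$ and $e\,\annid g=\annid g$ directly from \ref{e22} (since $e\rel\oml g$ gives $\annid g\rel\omr\annid e$, and $\annid e\rel\oml g$ gives $\annid g\rel\omr e$), whereas you route through the Baer identities $Sg=\lann{\annid g}$ and $\rann e=\annid e S$, $\rann{\annid e}=eS$ from Lemma~\ref{ann}. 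Conversely, for the meet the paper invokes $S\annid e=\lann e$ from Lemma~\ref{ann}, while you use $\annid e\cdot e=0$ directly. These are interchangeable shortcuts to the same computation.
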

\begin{proof}
  By Theorem~\ref{srbsg}, there exists a strongly regular Baer semigroup
  $S$ with $E$ as its biordered set of idempotents. Let $e$
  be an element of $E$. We will show that $Se$ and
  $S\annid e$ are complements of each other in the lattice of
  principal left ideals of $S$. 
  
  Let $x\in Se\medcap S\annid e$. Then $xe=x$, since $x\in Se$. Also,
  $x\in S\annid e$ and by Lemma~\ref{ann}, we have $S\annid e=\lann
  e$, so that $xe=0$. Thus $x=xe=0$ and it follows that $Se\medcap
  S\annid e=\{0\}$.
  To show that $Se\medvee S\annid e=S$, suppose that 
  $Se\medvee S\annid e=Sf$, so that $Se\subseteq Sf$
  and $S\annid e\subseteq Sf$, which means $e\rel\oml f$ and 
  $\annid e\rel\oml f$. Since $e\rel\oml f$, we have 
  $\annid f\rel\omr\annid e$, so that $\annid e\annid f=\annid f$ and
  since $\annid e\rel\oml f$ we have $\annid f\rel\omr \dannid e=e$,
  so that $e\annid f=\annid f$. Hence
  \begin{equation*}
    \annid f=\annid e\annid f=\annid e(e\annid f)
    =(\annid ee)\annid f=0
  \end{equation*}
  and so $f=\dannid f=\annid 0=1$. Since $1$ is the identity of
  $S$, we have $S1=S$. Thus 
  $Se\medvee S\annid e=Sf=S1=S$.
\end{proof}

Since the multiplicative semigroup of a regular ring with unity is a
strongly regular Baer semigroup, these results  holds in particular for biordered
sets of regular rings. Now in \cite{vn}, it is shown that if $L$ is a
complemented modular lattice satisfying certain conditions, then it can
be realized as the lattice of principal left ideals of a matrix ring
over a regular ring. To translate these conditions into biorder terms,
we take a look at the idempotents in such a ring.

\section{Idempotents in matrix rings}

In \cite{vn}, it is shown that if $R$ is a regular ring, then for
every natural number $n$, the ring $R_n$ of $n\times n$ matrices over
$R$ is also a regular ring. In this section, we look at some
peculiarities of the biordered set of $R_n$. We first note that this
ring contains a special class of idempotents. For each
$i=1,2,\dotsc,n$ we define $\matre i$ to be the $n\times n$ matrix
with a single 1 at the $i^\text{th}$ row and $i^\text{th}$ column and
0's elsewhere. It easily follows from the usual rules of matrix
multiplication that $\matre i$ is an idempotent for each $i$. Also,
$\matre i\matre j=\matr0$ for $i\ne j$ and
$\matre1+\matre2+\dotsb+\matre n=\matr I$, where $\matr0$ is the
$n\times n$ zero matrix and $\matr I$ is the $n\times n$ identity
matrix. We now see how the condition on the sum of these idempotents
can be translated into biorder terms.
\begin{prop}
  Let $e_1, e_2,\dotsc,e_n$ be idempotents in a regular ring $R$ with
  unity such that $e_ie_j=0$ for $i\ne j$. Then the following are
  equivalent
  \begin{mathenum}
  \item $e_1+e_2+\dotsb+e_n=1$
  \item If $e$ is an idempotent in $R$ such that $e_i\rel\om e$ for
    each $i=1,2,\dotsc,n$, then $e=1$
  \end{mathenum}
\end{prop}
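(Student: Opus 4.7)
The plan is to prove both implications by direct ring-theoretic computation, with the orthogonality hypothesis $e_ie_j=0$ (for $i\ne j$) doing essentially all the work; the biorder language only repackages the identities $ee_i=e_i$ and $e_ie=e_i$.

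For the implication (i)$\Rightarrow$(ii), I would first unpack $e_i\rel\om e$ as the two equations $ee_i=e_i$ and $e_ie=e_i$, which hold for every $i=1,\dotsc,n$. Assuming $e_1+\dotsb+e_n=1$, I then multiply this identity on the left by $e$ and distribute:
\begin{equation*}
  e=e\cdot 1=e(e_1+\dotsb+e_n)=ee_1+\dotsb+ee_n=e_1+\dotsb+e_n=1,
\end{equation*}
which is exactly (ii).

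For the converse (ii)$\Rightarrow$(i), set $s=e_1+\dotsb+e_n$ and first check that $s$ is itself an idempotent of $R$: expanding $s^2=\sum_{i,j}e_ie_j$ and killing the cross-terms via $e_ie_j=0$ leaves $\sum_i e_i^2=\sum_i e_i=s$. Next, for each fixed $k$, the same orthogonality yields
\begin{equation*}
  se_k=\sum_i e_ie_k=e_k^2=e_k\quad\text{and}\quad e_ks=\sum_i e_ke_i=e_k^2=e_k,
\end{equation*}
so $e_k\rel\oml s$ and $e_k\rel\omr s$, whence $e_k\rel\om s$ for every $k$. Applying hypothesis (ii) to the idempotent $s$ now forces $s=1$, which is (i).

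There is no real obstacle in either direction; the argument is purely computational. The one observation worth isolating is conceptual: the ring-theoretic identity $e_1+\dotsb+e_n=1$ and the biorder statement that ``$1$ is the only idempotent sitting above every $e_i$ in the partial order $\om$'' carry the same information once $e_i\rel\om e$ is spelled out as $ee_i=e_i=e_ie$, and orthogonality is precisely what makes the sum $s$ play the role of a common upper bound for the $e_i$.
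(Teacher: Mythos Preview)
Your proof is correct and essentially identical to the paper's: both directions are handled by the same direct computations, multiplying $1=e_1+\dotsb+e_n$ by $e$ for (i)$\Rightarrow$(ii), and verifying that $s=e_1+\dotsb+e_n$ is an idempotent above every $e_i$ for (ii)$\Rightarrow$(i). The only cosmetic difference is that the paper names the sum $e$ rather than $s$.
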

\begin{proof}
  First suppose that (i) holds and suppose that $e$ is an idempotent
  in $R$ with $e_i\rel\om e$ for $i=1,2,\dotsc,n$. Then $ee_i=e_i$ for
  each $i=1,2,\dotsc,n$, so that
  \begin{equation*}
    e=e1=e(e_1+e_2+\dotsb+e_n)=e_1+e_2+\dotsb+e_n=1
  \end{equation*}
  which gives (ii). 

  Conversely, suppose that (ii) holds and let
  $e=e_1+e_2+\dotsb+e_n$. Then
  \begin{equation*}
    e^2=(e_1+e_2+\dotsb+e_n)(e_1+e_2+\dotsb+e_n)=e_1+e_2+\dotsb+e_n=e
  \end{equation*}
  since each $e_i$ is an idempotent and $e_ie_j=0$ for $i\ne j$. Thus
  $e$ is an idempotent. Moreover for each $e_i$
  \begin{equation*}
    e_ie=e_i(e_1+e_2+\dotsb+e_n)=e_i
  \end{equation*}
  and similarly, $ee_i=e_i$. Thus $e_i\rel\om e$ for each $i$ and so
  $e=1$, by (ii).  
\end{proof} 

This discussion, together with Lemma~\ref{prod0}, gives the following 
\begin{prop}
  Let $R$ be a regular ring with unity and let $R_n$ be the ring of
  $n\times n$ matrices over  $R$. Then there exists idempotents
  $\matre1, \matre2,\dotsc\matre n$ in $R_n$ such that
  \begin{mathenum}
  \item $\mset{\matre i}{\matre j}=\{\matr 0\}$, for $i\ne j$
  \item if $\matre{}$ is an idempotent in $R_n$ such that 
    $\matre i\rel\om\matre{}$ for each $i=1,2,\dotsc,n$, then
    $\matre{}=\matr I$
    \qed
  \end{mathenum} 
\end{prop}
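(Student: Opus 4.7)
The plan is to take $\matre1,\dotsc,\matre n$ to be precisely the matrix units already defined in the paragraph preceding the statement, namely the $n\times n$ matrices with a single $1$ in the $(i,i)$-entry and $0$'s elsewhere, and then deduce the two biorder-theoretic conditions from the two results already assembled: Lemma~\ref{prod0} for (i) and the immediately preceding Proposition for (ii).

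For (i), the discussion before the statement records that $\matre i\matre j=\matr0$ whenever $i\ne j$. Since $R$ is a regular ring with unity, it is noted at the start of the section (citing \cite{vn}) that $R_n$ is again a regular ring, so its multiplicative semigroup is a regular semigroup with zero element $\matr0$. Lemma~\ref{prod0}, applied with $e=\matre i$ and $f=\matre j$, then converts the ring-theoretic equality $\matre i\matre j=\matr0$ into the biorder-theoretic equality $\mset{\matre i}{\matre j}=\{\matr0\}$.

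For (ii), the same preliminary discussion notes that $\matre1+\matre2+\dotsb+\matre n=\matr I$, which is the unity of $R_n$. Thus the hypothesis of the preceding Proposition holds with $R$ replaced by $R_n$ and the idempotents $e_i$ replaced by $\matre i$: the $\matre i$ are pairwise orthogonal idempotents whose sum is the identity. Condition (i) of that Proposition therefore holds, and its equivalence with condition (ii) yields exactly the statement we want: any idempotent $\matre{}$ in $R_n$ with $\matre i\rel\om\matre{}$ for every $i$ must equal $\matr I$.

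The proof is essentially a packaging step, with no real obstacle: everything has been arranged in the preceding paragraph (existence and properties of the matrix units $\matre i$, including the crucial identities $\matre i\matre j=\matr0$ and $\sum_i\matre i=\matr I$) so that the two cited results can be invoked verbatim. The only point that needs to be mentioned explicitly is that $R_n$ is regular, so that Lemma~\ref{prod0} is applicable; this is recalled at the top of the section.
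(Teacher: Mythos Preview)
Your proposal is correct and matches the paper's approach exactly: the paper simply states that ``this discussion, together with Lemma~\ref{prod0}, gives the following'' and marks the proposition with a \qed, meaning it is deduced precisely as you describe --- Lemma~\ref{prod0} handles (i) from $\matre i\matre j=\matr0$, and the immediately preceding Proposition handles (ii) from $\matre1+\dotsb+\matre n=\matr I$. Your explicit remark that $R_n$ is regular (so Lemma~\ref{prod0} applies) is a helpful clarification the paper leaves implicit.
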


Another property of these idempotents is that any pair of them
generate principal left ideals which have a common complement (see the
proof of Theorem 3.3, Part II,\cite{vn}). To describe this property in
biorder terms, we introduce some terminology from \cite{kss} in a
slightly modified form. 

Let $e$ and $f$ be idempotents in a biordered set $E$. As in
\cite{kss}, by an $E$-sequence from $e$ to $f$, we mean a finite
sequence $e_0=e,e_1,e_2,\dotsc,e_{n-1},e_n=f$ of elements of $E$ such
that $e_{i-1}(\gle\medcup\gre)e_i$ for $i=1,2,\dotsc,n$ and in this
case, $n$ is called the length of the $E$-sequence. If there exists an
$E$-sequence from $e$ to $f$, we define $d(e,f)$ to be the length of
the shortest $E$-sequence from $e$ to $f$; also we define $d(e,e)=1$.
If there is no $E$-sequence form $e$ to $f$,we define $d(e,f)=0$.  For
our purposes, we will have to distinguish between $E$-sequences
starting with $\gle$ and those starting with $\gre$. For idempotents
$e$ and $f$, we define $d_l(e,f)$ to be the length of the shortest
$E$-sequence from $e$ to $f$, which start with the $\gle$ relation and
$d_r(e,f)$ to be the length of the shortest $E$-sequence from $e$ to
$f$ which start with the $\gre$ relation.

The condition for two principal left ideals of a ring to have a common
complement can be described in terms of the $d_l$ function as follows.
Following \cite{vn}, two elements of a lattice which have a common
complement are said to be \emph{in perspective}.
\begin{prop}\label{idpersp}
  Let $E$ be the set of idempotents of a regular ring $R$ and
  $e$ and $f$ be elements of $E$. Then $\gle(e)$ and $\gle(f)$
  are in perspective in $\llat E$ if and only if 
  $1\le d_l(e,f)\le3$.   
\end{prop}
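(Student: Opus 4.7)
The plan is to translate the statement into the lattice of principal left ideals of $R$, using the identification $\gle(e) \leftrightarrow Re$ under which two idempotents are $\gle$-related iff they generate the same left ideal and $\gre$-related iff they generate the same right ideal. Perspectivity of $\gle(e)$ and $\gle(f)$ then becomes: there is an idempotent $g$ with $R = Re \oplus Rg = Rf \oplus Rg$.

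For the implication $d_l(e,f)\le 3 \Rightarrow$ perspectivity, I would first prove the following key lemma: if $e_1$ and $e_2$ are $\gre$-related idempotents, then $R\annid{e_1}$ is a common complement of $Re_1$ and $Re_2$. For $Re_1$ this is the standard Peirce decomposition. For $Re_2$, the relation $e_1 R = e_2 R$ yields $e_1 e_2 = e_2$ and $e_2 e_1 = e_1$; the decomposition $e_2 = e_2 e_1 + e_2 \annid{e_1}$ then gives $R = Re_2 + R\annid{e_1}$, while a short annihilator argument using Lemma~\ref{ann} gives $Re_2 \cap R\annid{e_1} = 0$. Given an $E$-sequence of $d_l$-length $\le 3$ from $e$ to $f$, one extracts a pair of $\gre$-related idempotents $e_1, e_2$ with $Re_1 = Re$ and $Re_2 = Rf$ (the length-1 case is trivial since then $Re = Rf$), and the lemma delivers the common complement.

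For the converse, suppose $Rg$ is a common complement. Write $1 = u_1 + v_1 = u_2 + v_2$ with $u_1 \in Re$, $u_2 \in Rf$, $v_1, v_2 \in Rg$. A routine Peirce argument shows the $u_i, v_i$ are idempotents with $u_i v_i = v_i u_i = 0$, $Ru_1 = Re$, $Ru_2 = Rf$, and $Rv_1 = Rv_2 = Rg$. From $Rv_1 = Rv_2$ I write $v_1 = xv_2$, so $v_1 u_2 = xv_2 u_2 = 0$; hence $u_1 u_2 = (1-v_1)u_2 = u_2$, and symmetrically $u_2 u_1 = u_1$. Thus $u_1 R = u_2 R$, i.e., $u_1$ and $u_2$ are $\gre$-related, and the $E$-sequence $e, u_1, u_2, f$ (with $e$ and $u_1$, resp.\ $u_2$ and $f$, in the same $\gle$-class) witnesses $1 \le d_l(e,f) \le 3$.

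The main obstacle is the converse: recognizing that a common complement $Rg$ forces the Peirce idempotents $u_1, u_2$ to be $\gre$-related, and that one cannot generally shorten the length-3 sequence. For instance, in $M_2(k)$ the idempotents $e_{11}$ and $e_{22}$ are in perspective but no idempotent $h$ satisfies both $Rh = Re_{11}$ and $hR = e_{22}R$, so $d_l(e_{11}, e_{22}) = 3$. The length-3 structure must therefore be extracted by combining both Peirce decompositions and exploiting $Rv_1 = Rv_2$ to produce the vanishing identities $v_1 u_2 = v_2 u_1 = 0$.
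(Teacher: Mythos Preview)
Your argument is correct and parallels the paper's proof. Both directions rest on the same core observation---that $h\rel\gre k$ iff $\annid h\rel\gle\annid k$ (Proposition~\ref{annid})---so that $R\annid h=R\annid k$ is automatically a common complement of $Rh$ and $Rk$ whenever $h\rel\gre k$. The only substantive difference is in the direction ``perspective $\Rightarrow d_l\le3$'': the paper cites von Neumann (Part~II, Theorem~2.1 of \cite{vn}) as a black box for the existence of an idempotent $h$ with $Rh=Re$ and $R(1-h)=Rg$, and similarly $k$ for the pair $f,g$, then deduces $h\rel\gre k$ from $\annid h\rel\gle\annid k$. You instead build these idempotents explicitly as the Peirce components $u_1,u_2$ of $1$ and verify $u_1\rel\gre u_2$ by hand. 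Your route is more self-contained; the paper's is shorter by outsourcing exactly that step.

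One minor wording issue in your key lemma: the identity $e_2=e_2e_1+e_2\annid{e_1}$ does not by itself yield $R=Re_2+R\annid{e_1}$. The clean justification is that $e_1\rel\gre e_2$ forces $\annid{e_1}\rel\gle\annid{e_2}$, hence $R\annid{e_1}=R\annid{e_2}$, which complements $Re_2$ by the Peirce decomposition for $e_2$; equivalently, compute $1-e_2=(1-e_2)(1-e_1)\in R\annid{e_1}$ directly from $e_1e_2=e_2$. This is precisely how the paper handles the converse direction.
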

\begin{proof}
  First suppose that $\gle(e)$ and $\gle(f)$ are in perspective and
  let $\gle(g)$ be a common complement of $\gle(e)$ and $\gle(f)$ in
  $\llat E$. Since $\gle(e)$ and $\gle(g)$ are complements of
  each other in $\llat E$, there exists $h$ in $E$ with $Rh=Re$ and 
  $R(1-h)=Rg$ (see \cite{vn}, Part II, Theorem~2.1) so that
  \begin{equation*}
    \gle(h)=\gle(e)
    \quad\text{and}\quad
    \gle(1-h)=\gle(g)
  \end{equation*}
  Again, since $\gle(f)$ and
  $\gle(g)$ are complements of each other, there exists $k$ in
  $E$ 
  \begin{equation*}
    \gle(k)=\gle(f)
    \quad\text{and}\quad 
    \gle(1-k)=\gle(g)
  \end{equation*}
  Now since $\gle(e)=\gle(h)$, we have $e\rel\gle h$ and since
  $\gle(k)=\gle(f)$, we have $k\rel\gle f$. Also, we have
  $\gle(1-h)=\gle(g)=\gle(1-k)$ so that $1-h\rel\gle 1-k$ and hence
  $h\rel\gre k$, by definition of the $\gle$-relation and
  Proposition~\ref{annid}. It follows from the definition of $d_l$
  that $1\le d_l(e,f)\le3$.
  
  Conversely, suppose $e$ and $f$ are elements of $E$ with $1\le
  d_l(e,f)\le 3$. Then there exist $g$ and $h$ in $E$ with $e\rel\gle
  g\rel\gre h\rel\gle f$ (where some of the elements may be equal).
  Since $e\rel\gle g$, we have $\gle(e)=\gle(g)$ and so $\gle(1-g)$ is
  a complement of $\gle(g)=\gle(e)$. Also, from $g\rel\gre h$, we have
  $1-g\rel\gle 1-h$ so that $\gle(1-g)=\gle(1-h)$ and so $\gle(1-g)$
  is a complement of $\gle(h)$. Moreover, from $h\rel\gle f$, we have
  $\gle(h)=\gle(f)$.  Hence $\gle(1-g)$ is a complement of
  $\gle(h)=\gle(f)$. Thus $\gle(1-g)$ is a complement of both
  $\gle(e)$ and $\gle(f)$.
\end{proof}

We next show that any regular biordered set satisfying some of the
conditions discussed so far can be realized as the set of idempotents
of a ring of matrices over a regular ring.

\section{Biordered sets of matrix rings}

In this section, we prove our main result:
\begin{thm}\label{mr}  Let $E$ be a regular biordered set
  satisfying the following 
  properties.
  \renewcommand{\theenumi}{{\normalfont(\textsf{E\arabic{enumi}})}}
  \renewcommand{\labelenumi}{\theenumi}
  \renewcommand{\theenumii}{{\normalfont(\roman{enumii})}}
  \renewcommand{\labelenumii}{\theenumii}
  \begin{enumerate}
  \item There exists an element $0$ in $E$ such
    that $0\rel\om e$ for each $e$ in $E$
  \item There exists a map 
    $e\mapsto\annid e$ satisfying the following conditions:
    \begin{enumerate}
    \item $\dannid e=e$ for each $e$ in $E$
    \item $f\rel\oml e$ if and only if $\annid e\rel\omr\annid f$ for
      $e$, $f$ in $E$
    \item $f\rel\oml\annid e$ if and only if $\mset fe=\{0\}$ for $e$,
      $f$ in $E$
    \end{enumerate}
  \item If $f\rel\om\annid e$, then 
    $\swset{\annid e}{\annid f}\medcap\swset{\annid f}{\annid e}\ne\emptyset$
    \label{e3}
  \item There exists idempotents $e_1,e_2,\dotsc,e_n$ in $E$,
    where $n\ge4$, satisfying the following conditions:\label{e4}
    \begin{enumerate}
    \item \mset{e_i}{e_j}=\{0\}, for $i\ne j$\label{e41}
    \item if $e$ is in $E$ with $e_i\rel\om e$ for $i=1,2,\dotsc,n$,
      then $e=1$\label{e42}
    \item $d_l(e_i,e_j)=3$, for $i\ne j$\label{e43}
    \end{enumerate}
  \end{enumerate}
  Then there exists a regular ring $R$ with the biordered set of
  idempotents of the ring $R_n$ of $n\times n$ matrices over $R$
  isomorphic with $E$. 
\end{thm}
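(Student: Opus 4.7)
The plan is to coordinatize $\llat E$ via Von Neumann's theorem and then lift the resulting lattice isomorphism to an isomorphism of biordered sets.

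First, since $E$ satisfies \ref{e1} and \ref{e2}, Theorem~\ref{srbsg} supplies a strongly regular Baer semigroup $S$ having $E$ as its biordered set of idempotents, and Corollary~\ref{llatcm} gives that $\llat E$ is a complemented modular lattice. The key preliminary step is to show that $\gle(e_1),\dotsc,\gle(e_n)$ form a \emph{homogeneous basis} of $\llat E$ of order $n$. Pairwise perspectivity is \ref{e43} read through Proposition~\ref{idpersp}. Independence comes from \ref{e41}: by Lemma~\ref{prod0} and \ref{e23}, $e_j\rel\oml\annid{e_i}$ for $j\ne i$, so $\bigvee_{j\ne i}\gle(e_j)\le\gle(\annid{e_i})$, and Corollary~\ref{llatcompl} then forces $\gle(e_i)\wedge\bigvee_{j\ne i}\gle(e_j)=0$. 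To obtain $\bigvee_i\gle(e_i)=\gle(1)$, I would use \ref{e3} to inductively form idempotents $s_k=\annid{h_k}$, with $h_k$ drawn from the non-empty intersection $\swset{\annid{s_{k-1}}}{\annid{e_k}}\medcap\swset{\annid{e_k}}{\annid{s_{k-1}}}$; the sandwich-set identities $\annid{e_k}h_k\annid{s_{k-1}}=h_k=\annid{s_{k-1}}h_k\annid{e_k}$ (together with $e\annid e=0$ in $S$) yield $e_ih_k=h_ke_i=0$, hence $e_i\rel\om s_k$ for every $i\le k$, so \ref{e42} applied to $s_n$ gives $s_n=1$; the chain of $\oml$-relations then yields $\bigvee_i\gle(e_i)=\gle(1)$.

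With a homogeneous basis of order $n\ge 4$ in hand, Von Neumann's coordinatization theorem (\cite{vn}, Part~II) furnishes a regular ring $R$ and a lattice isomorphism $\phi\colon\llat E\to\llat{R_n}$ carrying the chosen frame to the standard frame $\gle(\matre 1),\dotsc,\gle(\matre n)$ of $R_n$; Corollary~\ref{llatcm} simultaneously supplies a compatible dual isomorphism $\rlat E\to\rlat{R_n}$.

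The main obstacle is promoting $\phi$ to a biordered set isomorphism $\Phi\colon E\to E(R_n)$. In $E(R_n)$, each idempotent $f$ is uniquely determined by, and recoverable from, the complementary pair $(R_nf,R_n(I-f))$ in $\llat{R_n}$. For each $e\in E$, the pair $(\gle(e),\gle(\annid e))$ consists of complements in $\llat E$ (by Corollary~\ref{llatcompl}); applying $\phi$ gives a complementary pair in $\llat{R_n}$, which I would take to be $(\gle(\Phi(e)),\gle(\annid{\Phi(e)}))$, thus defining $\Phi(e)$. Injectivity of $\Phi$ follows from the analogous injectivity on the $E$-side: if $\gle(e')=\gle(e)$ and $\gle(\annid{e'})=\gle(\annid e)$, then $e\rel\gle e'$ and, via Proposition~\ref{annid}(i), $e\rel\gre e'$, which forces $e=e'$. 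For surjectivity, the chosen frame $e_1,\dotsc,e_n$ together with \ref{e3} is the essential tool: every matrix idempotent of $R_n$ is built from the standard frame by perspectivities and orthogonal sums, and each of these constructions transports back through $\phi$ to an analogous construction in $E$ (the sums via iterated use of \ref{e3}, the perspectivities via the frame relations \ref{e43} and Proposition~\ref{idpersp}). Finally, preservation of $\oml$ is immediate from $e\rel\oml f\iff\gle(e)\le\gle(f)$ and the functoriality of $\phi$; preservation of $\omr$ follows by dualising through $\annid{}$ via Proposition~\ref{annid}(i); and preservation of the partial products, and hence of the sandwich sets, reduces to routine case-checks using that these biorder operations are fully encoded in the lattice-plus-complement data that $\Phi$ respects by construction. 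The surjectivity step — verifying that every complementary pair in $\llat E$ is realised by some element of $E$ — is where the bulk of the work lies.
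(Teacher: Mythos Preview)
Your overall strategy---coordinatize $\llat E$ via von Neumann and then lift the lattice isomorphism to a biorder isomorphism---is sound, and your construction of the homogeneous basis is essentially the paper's (though you should invoke Proposition~\ref{llatpersp} rather than Proposition~\ref{idpersp}, since the latter is stated only for idempotents of a ring and at this stage $E$ is not yet known to come from one). The divergence is in the final lifting step. The paper does not build $\Phi\colon E\to E(R_n)$ by hand; instead it invokes Pastijn's results from~\cite{pa}: to any complemented modular lattice $L$ one attaches a biordered set $E(L)$, and both the biordered set of a strongly regular Baer semigroup with left-ideal lattice $L$ (Corollary~4 of~\cite{pa}) and the biordered set of a regular ring with left-ideal lattice $L$ (Theorem~5 of~\cite{pa}) are isomorphic to $E(L)$. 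Taking $L=\llat E$ yields $E\cong E(\llat E)\cong E(R_n)$ in two strokes.

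Your direct construction is in effect an attempt to reprove the needed fragment of Pastijn's theorem, and the gap you yourself flag is genuine: you need every complementary pair in $\llat E$ to be realised as $(\gle(e),\gle(\annid e))$ for some $e\in E$, and nothing in axioms \ref{e1}--\ref{e4} makes this immediate---the map $e\mapsto\annid e$ singles out \emph{one} complement of each $\gle(e)$, not all of them. Your proposed route to surjectivity, that ``every matrix idempotent of $R_n$ is built from the standard frame by perspectivities and orthogonal sums,'' does not hold up: an arbitrary idempotent matrix has no such canonical decomposition, and even granting one, transporting the steps through $\phi^{-1}$ presupposes exactly the realisability you are trying to establish. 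Similarly, ``preservation of the partial products\ldots reduces to routine case-checks'' understates the work; this is precisely what Pastijn's construction of $E(L)$ packages. The clean fix is to cite~\cite{pa} as the paper does; if you insist on a self-contained argument, you must actually carry out Pastijn's analysis, which is substantially more than a paragraph.
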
 
To prove this result, we first look at some consequences of these
conditions. We start by noting that in the case of a biordered set
satisfying \ref{e1}, \ref{e2} and \ref{e3}, there is exactly one
element in 
$\swset{\annid e}{\annid f}\medcap\swset{\annid f}{\annid e}$.
\begin{prop}\label{oplus}
  Let $E$ be a regular biordered set satisfying \ref{e1}, \ref{e2} and
  \ref{e3}. If $f\rel\om \annid e$ then there is a unique element in
  $E$ belonging to  
  $\swset{\annid e}{\annid f}\medcap\swset{\annid f}{\annid e}$.
\end{prop}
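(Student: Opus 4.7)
The plan is to reduce the uniqueness claim to the semigroup-theoretic characterization of sandwich sets already recalled in the paper (cf.\ Theorem 1.1 of \cite{kss}): if $E$ is the biordered set of idempotents of a regular semigroup $S$, then
\begin{equation*}
  \swset{a}{b}=\{h\in E\colon bha=h\;\;\text{and}\;\;ahb=ab\;\;\text{in}\;\;S\}.
\end{equation*}
Under the hypotheses \ref{e1} and \ref{e2} we may invoke Theorem~\ref{srbsg} (or directly the existence of an idempotent-generated regular semigroup realizing $E$) to produce such a semigroup $S$, in which we can then compute products freely.

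Now suppose $f\rel\om\annid e$ and let $h$ be any element of $\swset{\annid e}{\annid f}\medcap\swset{\annid f}{\annid e}$; by \ref{e3} such an $h$ exists. Applying the characterization above with $a=\annid e$, $b=\annid f$ gives
\begin{equation*}
  \annid f h\annid e=h
  \quad\text{and}\quad
  \annid e h\annid f=\annid e\annid f,
\end{equation*}
while applying it with $a=\annid f$, $b=\annid e$ gives
\begin{equation*}
  \annid e h\annid f=h
  \quad\text{and}\quad
  \annid f h\annid e=\annid f\annid e.
\end{equation*}
Comparing the first equation of each pair yields $h=\annid f\annid e$, and comparing the second equations yields $h=\annid e\annid f$. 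Hence any element of the intersection is forced to equal $\annid e\annid f$, which proves uniqueness (and incidentally recovers $\annid e\annid f=\annid f\annid e$).

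There is no real obstacle beyond remembering to move into a realizing regular semigroup: once we have the equational description of sandwich sets available, the two membership conditions for the two sandwich sets feed into each other so that the equations $\annid fh\annid e=h$ and $\annid fh\annid e=\annid f\annid e$ (and symmetrically for the other pair) collapse $h$ to the single element $\annid e\annid f$. The only delicate point is to use the two sandwich sets in the correct order so that the substitutions in $bha=h$ and $ahb=ab$ give the four equations above; no use of ring addition is needed, in contrast with the earlier proposition where the candidate idempotent $1-(e+f)$ was constructed explicitly.
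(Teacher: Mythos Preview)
Your proof is correct and follows essentially the same approach as the paper: pass to a realizing regular semigroup (via Theorem~\ref{srbsg}) and use the equational characterization of sandwich sets from \cite{kss} to pin down $h$ as $\annid e\,\annid f$. The only cosmetic difference is that the paper extracts $\annid e h=h$ and $h\annid f=h$ from the $\mset{\annid f}{\annid e}$ membership and then computes $h=h^2=(\annid e h)(h\annid f)=\annid e h\annid f=\annid e\annid f$, whereas you read off all four equations and match the two with common left-hand side; your phrase ``comparing the first equation of each pair'' is slightly ambiguous, but your final paragraph makes the intended pairing clear.
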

\begin{proof}
  By Theorem~\ref{srbsg}, there exists a regular semigroup
  $S$ with its biordered set of idempotents equal to $E$. Since
  $S$ is regular, we have 
  \begin{equation*}
    \swset ef=\{h\in E\colon ehf=ef\;\;\text{and}\;\;fhe=h\}  
  \end{equation*}
  (see \cite{kss}, Theorem 1.1). Let 
  $h\in\swset{\annid e}{\annid f}\medcap\swset{\annid f}{\annid e}$.  
  Then $h\in \swset{\annid e}{\annid f}$, 
  so that
  \begin{equation*}
    \annid e h\annid f=\annid e\annid f
  \end{equation*}
  Also, $h\in\swset{\annid f}{\annid e}$,
  so that 
  \begin{equation*}
    \annid e h=h\annid f=h
  \end{equation*}
  Hence
  \begin{equation*}
    h=h^2=(\annid eh)(h\annid f)=\annid eh\annid f=\annid e\annid f
  \end{equation*}
  Thus the only element in 
  $\swset{\annid e}{\annid f}\medcap\swset{\annid f}{\annid e}$ is 
  $\annid e\annid f$.
\end{proof}

In the following, for $e$ and $f$ in a biordered set satisfying
\ref{e1}, \ref{e2} and \ref{e3}, if $h$ is the unique element of
$\swset{\annid e}{\annid f}\medcap\swset{\annid f}{\annid e}$, then we
denote $\annid h$ by $e\oplus f$. The next result gives an alternate
characterization of $e\oplus f$.
\begin{prop}\label{sumidalt}
  Let $e$ and $f$ be elements of a regular biordered set satisfying \ref{e1},
  \ref{e2} and \ref{e3} with $f\rel\om\annid e$ and let $h=e\oplus f$.
  Then $h$ satisfies the following conditions.
  \begin{mathenum}
  \item $e\rel\om h$ and $f\rel\om h$
  \item if $g$ is in $E$ with $e\rel\oml g$ and $f\rel\oml g$,
    then $h\rel\oml g$
  \item if $g$ is in $E$ with $e\rel\omr g$ and $f\rel\omr g$,
    then $h\rel\omr g$
  \end{mathenum}
  Moreover, these properties characterize $h$.
\end{prop}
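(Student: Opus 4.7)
My plan is to leverage the explicit description $\annid h=\annid e\annid f=\annid f\annid e$, which is essentially already contained in the proof of Proposition~\ref{oplus}: there the element of $\swset{\annid e}{\annid f}\medcap\swset{\annid f}{\annid e}$ (which in our notation is $\annid h$) was shown equal to $\annid e\annid f$, and the symmetric computation using the two conditions from $\swset{\annid f}{\annid e}$ yields $\annid f\annid e$ as well. Once this identity is in hand, every biorder condition involving $h$ translates, via \ref{e22} or Lemma~\ref{dual}(ii), into a one-line multiplicative identity in the semigroup $S$ supplied by Theorem~\ref{srbsg}, which then falls out from the idempotency of $\annid e$ and $\annid f$.

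For part (i), each of the four relations $e\rel\oml h$, $e\rel\omr h$, $f\rel\oml h$, $f\rel\omr h$ translates to a product identity of the form $\annid a\annid h=\annid h$ or $\annid h\annid a=\annid h$ with $a\in\{e,f\}$. Each is immediate once the appropriate factorization of $\annid h$ is chosen; for instance $\annid e\annid h=\annid e(\annid e\annid f)=\annid e\annid f=\annid h$ handles $e\rel\oml h$, and the other three are analogous.

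Part (ii) rests on the same idea. Translating the hypotheses via \ref{e22} gives $\annid e\annid g=\annid g$ and $\annid f\annid g=\annid g$, and then
\begin{equation*}
  \annid h\annid g=\annid e(\annid f\annid g)=\annid e\annid g=\annid g,
\end{equation*}
which, again by \ref{e22}, is precisely $h\rel\oml g$. Part (iii) is the dual calculation: translate via Lemma~\ref{dual}(ii) to $\annid g\annid e=\annid g=\annid g\annid f$, use the factorization $\annid h=\annid f\annid e$, and compute $\annid g\annid h=(\annid g\annid f)\annid e=\annid g\annid e=\annid g$.

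For the characterization, suppose $h'\in E$ also satisfies (i)--(iii). Since $h$ itself satisfies (i), we have $e\rel\oml h$ and $f\rel\oml h$, so applying (ii) to $h'$ with $g=h$ yields $h'\rel\oml h$; the parallel application of (iii) yields $h'\rel\omr h$, and hence $h'\rel\om h$. Swapping the roles of $h$ and $h'$ gives $h\rel\om h'$, and antisymmetry of $\om$ forces $h'=h$. I do not anticipate any real obstacle here: the genuine content was already packed into Proposition~\ref{oplus}, and what remains is the careful bookkeeping of translating each biorder relation into its multiplicative counterpart via \ref{e22} and Lemma~\ref{dual}(ii) without getting the sides mixed up.
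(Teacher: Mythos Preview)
Your proposal is correct and matches the paper's own proof almost step for step: part~(ii) and the uniqueness argument are identical to the paper's, and your explicit computation for (iii) using $\annid h=\annid f\annid e$ is exactly what the paper abbreviates as ``a dual argument.'' The only cosmetic difference is in part~(i): the paper reads off the four relations directly from the containments $\annid h\in\swset{\annid e}{\annid f}\subseteq\mset{\annid e}{\annid f}$ and $\annid h\in\swset{\annid f}{\annid e}\subseteq\mset{\annid f}{\annid e}$, whereas you recompute them multiplicatively from the factorizations of $\annid h$---but this is the same content.
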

\begin{proof}
  To prove (i), note that $\annid{h}\in
  \swset{\annid{e}}{\annid{f}}\medcap\swset{\annid{f}}{\annid{e}}$, by
  definition.  Since $\annid h\in\swset{\annid e}{\annid f}$, we have
  $\annid h\rel\oml\annid e$ and $\annid h\rel\omr\annid f$, so that
  $e\rel\omr h$ and $f\rel\oml h$. Similarly, since $\annid
  h\in\swset{\annid f}{\annid e}$, we have $e\rel\oml h$ and
  $f\rel\omr h$. Thus $e\rel\om h$ and $f\rel\om h$.
 
  To prove (ii), let $g\in E$ with $e\rel\oml g$ and 
  $f\rel\oml g$. Then $\annid g\rel\omr \annid e$ and 
  $\annid g\rel\omr\annid f$. Hence
  \begin{equation*}
    \annid e\annid g=\annid g
    \quad\text{and}\quad
    \annid f\annid g=\annid g
  \end{equation*}
  Let $S$ be a regular semigroup with its biordered
  set of idempotents equal to $E$. Then as seen in the previous
  result, we have $\annid h=\annid e\annid f$. Hence
  \begin{equation*}
    \annid h\annid g=(\annid e\annid f)\annid g
    =\annid e(\annid f\annid g)
    =\annid e\annid g
    =\annid g
  \end{equation*}
  so that $\annid g\rel\omr\annid h$ and so $h\rel\oml g$. This proves
  (ii). A dual argument establishes (iii).
  
  To prove uniqueness of $h$, suppose $h$ and $h'$ are elements of
  $E$ satisfying these conditions. Then $e\rel\oml h'$ and
  $f\rel\oml h'$, so that $h\rel\oml h'$. Similarly $h\rel\omr h'$ so
  that $h\rel\om h'$. Interchanging the roles of $h$ and $h'$, we also
  have $h'\rel\om h$. Thus $h'=h$.
\end{proof}

Now let $e$ and $f$ be elements of $E$ with $f\rel\om\annid e$,
so that we have $e\oplus f$ in $E$. Let $h=e\oplus f$. Then by
the above result, $e\rel\oml h$ and $f\rel\oml h$, so that in the
lattice $\llat E =E/\gle$, we have $\gle(e)\le\gle(h)$ and 
$\gle(f)\le\gle(h)$. Also, if $g\in E$ with $\gle(e)\le\gle(g)$
and $\gle(f)\le\gle(g)$, then $e\rel\oml g$ and $f\rel\oml g$, so
that $h\rel\oml g$ so that $\gle(h)\le\gle(g)$. It follows that 
$\gle(e)\medvee\gle(f)=\gle(h)$. 

Also, in this case, $\gle(e)\medcap\gle(f)=\{0\}$. For suppose
$g\in\gle(e)\medcap\gle(f)$ so that $ge=g$ and $gf=g$, and so in any
regular idempotent generated semigroup $S$ with $E$ as the
biordered set of idempotents,
\begin{equation*}
  g=ge=(gf)e=g(fe)
\end{equation*}
Also since $f\rel\oml\annid e$, we have $\mset{f}{e}=\{0\}$ and so
$fe=0$, by Lemma~\ref{prod0}. Hence $g=g(fe)=0$.  Thus we have the result
below:
\begin{prop}\label{sumidlat}
  Let $E$ be a regular biordered set satisfying \ref{e1}, \ref{e2} and
  \ref{e3}. Then for $e$ and $f$ in $E$ with $f\rel\om\annid e$
  we have $\gle(e)\medvee\gle(f)=\gle(e\oplus f)$ and
  $\gle(e)\medcap\gle(f)=\{0\}$ in the lattice 
  $\llat E =E/\gle$.\qed
\end{prop}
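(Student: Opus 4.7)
The plan is to package the discussion in the paragraph immediately preceding the statement into a clean argument, leaning on Proposition~\ref{sumidalt} for the join and on Theorem~\ref{srbsg} together with Lemma~\ref{prod0} for the meet. Throughout, set $h=e\oplus f$ and fix, via Theorem~\ref{srbsg}, a strongly regular Baer (hence regular, idempotent-generated) semigroup $S$ having $E$ as its biordered set of idempotents, so that $\llat E$ is isomorphic with the lattice of principal left ideals of $S$ under $\gle(e)\mapsto Se$; in particular, lattice-meets in $\llat E$ correspond to intersections of principal left ideals of $S$, and the bottom of the lattice is $\gle(0)=\{0\}$ by condition \ref{e1}.

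For the join, Proposition~\ref{sumidalt}(i) gives $e\rel\oml h$ and $f\rel\oml h$, so $\gle(e)\le\gle(h)$ and $\gle(f)\le\gle(h)$ in $\llat E$; thus $\gle(h)$ is an upper bound. For the least-upper-bound property, suppose $g\in E$ satisfies $\gle(e)\le\gle(g)$ and $\gle(f)\le\gle(g)$, which unpacks to $e\rel\oml g$ and $f\rel\oml g$. Proposition~\ref{sumidalt}(ii) then yields $h\rel\oml g$, i.e.\ $\gle(h)\le\gle(g)$. Hence $\gle(e)\medvee\gle(f)=\gle(h)=\gle(e\oplus f)$.

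For the meet, take any $g$ in the intersection of the principal left ideals $Se$ and $Sf$ of $S$. Then $ge=g$ and $gf=g$, so in $S$ we compute
\begin{equation*}
  g=gf=(gf)e=g(fe).
\end{equation*}
But the hypothesis $f\rel\om\annid e$ gives in particular $f\rel\oml\annid e$, so by \ref{e23} we have $\mset fe=\{0\}$, and then Lemma~\ref{prod0} forces $fe=0$. Consequently $g=g\cdot 0=0$, so $Se\cap Sf=\{0\}$, which translates under the lattice isomorphism to $\gle(e)\medcap\gle(f)=\{0\}$.

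There is really no serious obstacle here: both claims are direct consequences of material already assembled. The only thing to be careful about is the notational conflation of the $\gle$-class $\gle(e)$ with the principal left ideal $Se$, but this is legitimated by the isomorphism $E/\gle\cong\{Se:e\in E\}$ recalled just before Corollary~\ref{llatcm}, so meets and joins in $\llat E$ may be computed either combinatorially (via Proposition~\ref{sumidalt}) or ideal-theoretically (via intersection in $S$), and our argument uses whichever is more convenient for each part.
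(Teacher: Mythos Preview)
Your proposal is correct and follows essentially the same route as the paper: the join is obtained from Proposition~\ref{sumidalt}(i)--(ii) exactly as in the text, and the meet is handled by the computation $g=g(fe)=0$ after deducing $fe=0$ from $f\rel\oml\annid e$, \ref{e23}, and Lemma~\ref{prod0}. The only cosmetic difference is that you phrase the meet argument ideal-theoretically via $Se\cap Sf$ in $S$, whereas the paper picks $g$ as an idempotent representative of the class $\gle(e)\medcap\gle(f)$; the underlying calculation is identical.
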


The above result can be extended. Let $E$ be as before and let
$S$ be an idempotent generated regular semigroup with $E$ as
the biordered set of idempotents. Suppose $e_1$, $e_2$, $e_3$ be
elements of $E$ with $\mset{e_i}{e_j}=\{0\}$ for $i\ne j$.
Since $\mset{e_1}{e_2}=\{0\}$, we have $e_1\rel\oml\annid{e_2}$ and
since $\mset{e_2}{e_1}=\{0\}$, we have $e_2\rel\oml\annid{e_1}$,
which implies $e_1\rel\omr\annid{e_2}$. Thus $e_1\rel\om\annid{e_2}$
and so we have $f_1=e_1\oplus e_2$ in $E$. In the same fashion,
since $\mset{e_1}{e_3}=\{0\}$ and $\mset{e_2}{e_3}=\{0\}$, we have
$e_1\rel\oml\annid{e_3}$ and $e_2\rel\oml\annid{e_3}$, so that
$f_1=e_1\oplus e_2\rel\oml\annid{e_3}$, by Proposition~\ref{sumidalt}. Dually, we
have $f_1\rel\omr\annid{e_3}$. Thus $f_1\rel\om\annid{e_3}$ and so we
have $f_1\oplus e_3$ in $E$. As in the proof of Proposition~\ref{sumidalt},
we can show that this element of $E$ is the least upper bound of
$e_1$, $e_2$ and $e_3$ with respect to $\oml$ and $\omr$ and so is
uniquely determined by these elements. Hence we can unambiguously write
$f_1\oplus e_3$ as $e_1\oplus e_2\oplus e_3$. Also, as in
Proposition~\ref{sumidlat}, we have
\begin{equation*}
  \gle(e_1)\medvee\gle(e_2)\medvee\gle(e_3)=
  \gle(e_1\oplus e_2\oplus e_3) 
\end{equation*}
Again, for distinct $i$, $j$, $k$, we have $e_i\rel\oml\annid{e_k}$ and
$e_j\rel\oml e_k$ so that $(e_i\oplus e_j)\rel\oml\annid{e_k}$, so
that $\mset{e_i\oplus e_j}{e_k}=\{0\}$  and so $(e_i\oplus e_j)e_k=0$
in $S$. Hence
\begin{equation*}
  \left(\gle(e_i)\medvee\gle(e_j)\right)\medcap\gle(e_k)
  =\gle(e_i\oplus e_j)\medcap\gle(e_k)=\{0\}
\end{equation*}
By induction, we have the following result. Note that elements
$a_1,a_2,\dotsc,a_n$ of a lattice are said to be independent
if for each $i=1,2,\dotsc,n$, we have 
$a_i\medwedge\bigl(\medvee_{\substack{j=1\\j\ne i}}^na_j\bigr)=0$
\begin{prop}\label{llatind}
  Let $E$ be a regular biordered set satisfying \ref{e1}, \ref{e2}
  and \ref{e3} and let $e_1,e_2,\dotsc,e_n$ be elements of $E$
  with $\mset{e_i}{e_j}=\{0\}$ for $i\ne j$. Then
  $\gle(e_1),\gle(e_2),\dotsc,\gle(e_n)$ are independent elements in
  the lattice $\llat E=E/\gle$ with
  $\gle(e_1)\medvee\gle(e_2)\dotsb\medvee\gle(e_n)=
  \gle(e_1\oplus e_2\dotsb\oplus e_n)$.\qed
\end{prop}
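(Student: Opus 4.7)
Proceed by induction on $n$, with Proposition~\ref{sumidlat} giving the case $n=2$ directly. Assume the proposition holds for $n$ and let $e_1,\dotsc,e_{n+1}$ in $E$ satisfy $\mset{e_i}{e_j}=\{0\}$ for all $i\ne j$. The first thing to do is translate the hypothesis: applying \ref{e23} (with $f=e_i$, $e=e_j$) and Lemma~\ref{dual}(iii) (with $f=e_i$, $e=e_j$, using $\mset{e_j}{e_i}=\{0\}$) together with \ref{e21}, we obtain $e_i\rel\om\annid{e_j}$ for every pair $i\ne j$. The inductive hypothesis then guarantees that $f=e_1\oplus\dotsb\oplus e_n$ exists; and the characterization of $f$ as the common $\oml$-and-$\omr$-least upper bound of $e_1,\dotsc,e_n$ (Proposition~\ref{sumidalt}(ii),(iii) applied iteratively along the chain $e_1,\,e_1\oplus e_2,\,\dotsc,\,f$) combined with $e_i\rel\om\annid{e_{n+1}}$ for each $i\le n$ yields $f\rel\om\annid{e_{n+1}}$. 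Hence $e_1\oplus\dotsb\oplus e_{n+1}=f\oplus e_{n+1}$ is defined.

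The join formula follows at once: Proposition~\ref{sumidlat} applied to the pair $(f,e_{n+1})$ gives $\gle(f)\medvee\gle(e_{n+1})=\gle(f\oplus e_{n+1})$, and the inductive identity $\gle(e_1)\medvee\dotsb\medvee\gle(e_n)=\gle(f)$ closes the step.

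For the independence condition, fix any $i\in\{1,\dotsc,n+1\}$ and consider the $n$-term family $\{e_j:j\ne i\}$; by what has just been established, its iterated $\oplus$-sum $g$ exists, and a symmetric run of the same argument shows $g\rel\om\annid{e_i}$. Proposition~\ref{sumidlat} applied to $(e_i,g)$ then gives $\gle(e_i)\medcap\gle(g)=\{0\}$, which by the join formula is precisely $\gle(e_i)\medcap\bigl(\medvee_{j\ne i}\gle(e_j)\bigr)=\{0\}$, as required.

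The main subtlety is the order-independence of the iterated $\oplus$: to pull out an arbitrary $e_i$ and sum the remaining terms, one must know the resulting element of $E$ does not depend on how those terms are ordered or parenthesized. This is handled by the uniqueness clause of Proposition~\ref{sumidalt}, which characterizes $e\oplus f$ intrinsically as the common $\oml$-and-$\omr$-least upper bound of $\{e,f\}$. Iterating this characterization, $e_1\oplus\dotsb\oplus e_n$ is the common $\oml$-and-$\omr$-least upper bound of the set $\{e_1,\dotsc,e_n\}$, so it depends only on the summands as a set; this legitimizes forming $\bigoplus_{j\ne i}e_j$ in the independence step.
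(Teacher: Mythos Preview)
Your proposal is correct and follows essentially the same route as the paper: both arguments translate $\mset{e_i}{e_j}=\{0\}$ into $e_i\rel\om\annid{e_j}$ via \ref{e23} and its dual, use Proposition~\ref{sumidalt}(ii),(iii) to push the partial sums under $\annid{e_k}$, invoke Proposition~\ref{sumidlat} for the join and the trivial meet, and appeal to the uniqueness clause of Proposition~\ref{sumidalt} for order-independence of the iterated~$\oplus$. The paper merely writes out the step from $n=2$ to $n=3$ explicitly and then says ``by induction,'' whereas you state the general inductive step; the content is the same.
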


We can show that as in the proof of Proposition~\ref{idpersp} that the condition 
\ref{e43} implies that $\gle(e_i)$ and $\gle(e_j)$ are in
perspective. 
\begin{prop}\label{llatpersp}
  Let $E$ be a regular biordered set satisfying \ref{e1} and \ref{e2}
  and let $e$ and $f$ be elements in $E$ with $d_l(e,f)\le3$. Then
  $\gle(e)$ and $\gle(f)$ are in perspective in the lattice
  $\llat E=E/\gle$. 
\end{prop}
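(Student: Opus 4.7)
The plan is to adapt the converse direction of Proposition~\ref{idpersp} to the purely biorder-theoretic setting. Two tools replace the ring-theoretic ingredients used there: Corollary~\ref{llatcompl}, which supplies complements of the form $\gle(\annid g)$, and Lemma~\ref{dual}(ii), which lets me transfer $\omr$-comparisons to $\oml$-comparisons between the associated annihilators. With these in hand, the argument should proceed by exhibiting a single explicit common complement of $\gle(e)$ and $\gle(f)$.

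First I would rewrite the hypothesis $d_l(e,f)\le 3$ as the existence of elements $g$ and $h$ in $E$ with
\begin{equation*}
  e\rel\gle g\rel\gre h\rel\gle f,
\end{equation*}
allowing some of these elements to coincide when $d_l(e,f)$ is strictly smaller than $3$; the reflexivity of $\gle$ and $\gre$ lets any shorter $\gle$-starting sequence be padded out to this uniform four-term shape. The candidate for the common complement is then $\gle(\annid g)$.

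Next I would verify the three required equalities in turn. From $e\rel\gle g$ we have $\gle(e)=\gle(g)$, and Corollary~\ref{llatcompl} tells us $\gle(\annid g)$ is a complement of $\gle(g)=\gle(e)$. From $g\rel\gre h$, i.e. $g\rel\omr h$ and $h\rel\omr g$, Lemma~\ref{dual}(ii) yields $\annid h\rel\oml\annid g$ and $\annid g\rel\oml\annid h$, so $\annid g\rel\gle\annid h$ and hence $\gle(\annid g)=\gle(\annid h)$. Finally, $h\rel\gle f$ gives $\gle(h)=\gle(f)$, and a second application of Corollary~\ref{llatcompl} shows $\gle(\annid h)$ is a complement of $\gle(h)=\gle(f)$. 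Combining these, $\gle(\annid g)=\gle(\annid h)$ is a common complement of $\gle(e)$ and $\gle(f)$, which is precisely perspectivity.

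I do not expect a genuine obstacle: the argument is essentially a direct translation of the ring-theoretic proof of Proposition~\ref{idpersp}, with Corollary~\ref{llatcompl} replacing the appeal to complements in the principal-ideal lattice of $R$ and Lemma~\ref{dual}(ii) replacing the use of Proposition~\ref{annid}(i). The only point that needs a little care is the normalization of short sequences into the four-term chain above, and this is dispatched at once by invoking the reflexivity of $\gle$ and $\gre$.
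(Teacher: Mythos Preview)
Your proposal is correct and follows essentially the same route as the paper's proof: both pass from $d_l(e,f)\le3$ to a chain $e\rel\gle g\rel\gre h\rel\gle f$, take $\gle(\annid g)$ as the common complement, use Corollary~\ref{llatcompl} to see it complements $\gle(e)$ and $\gle(f)$, and use the $\gre$--$\gle$ transfer (your Lemma~\ref{dual}(ii)) to identify $\gle(\annid g)$ with $\gle(\annid h)$. Your explicit remark about padding short sequences via reflexivity is a minor clarification the paper leaves implicit.
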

\begin{proof}
  Since $d_l(e,f)\le3$, there exists $g$ and $h$ in $E$, with
  $e\rel\gle g\rel\gre h\rel\gle f$. Let $k=\annid g$. Then $k$ is in
  $E$ with $\annid k=g$. So, $\gle(k)$ is a complement of
  $\gle(\annid k)=\gle(g)$ in the lattice $\llat E$, by
  Corollary~\ref{llatcompl}. Also, since $g\rel\gle e$, we have $\gle(g)=\gle(e)$.
  Thus $\gle(k)$ is a complement of $\gle(e)$ in $\llat E$.

  Again, since $g\rel\gre h$, we have $\annid h\rel\gle \annid g=k$
  so that $\gle(k)=\gle(\annid h)$ is a complement of $\gle(h)$ in
  $\llat E$. Also, since $h\rel\gle f$, we have
  $\gle(h)=\gle(f)$. Thus $\gle(k)$ is a complement of $\gle(f)$ also
  in $\llat E$. 
\end{proof} 

Now suppose $E$ is a regular biordered set satisfying \ref{e1},
\ref{e2}, \ref{e3} and \ref{e4}. Then by Theorem~\ref{srbsg},
there exists a strongly regular Baer semigroup $S$ with its
biordered set of idempotents isomorphic with $E$. Also, the
lattice of principal left ideals of $S$ is isomorphic with
$E/\gle=\llat E$, so that $\llat E$ is a complemented modular lattice,
as seen in Corollary~\ref{llatcm} 

In \cite{pa}, it is shown how a biordered set
$E(L)$ can be constructed from a complemented modular
lattice $L$ and it is shown that if $S$ is a strongly
regular Baer semigroup with its lattice of principal left ideals
isomorphic with $L$, then its biordered set $E(S)$
is isomorphic with $E(L)$. (see Corollary 4 of \cite{pa})

Hence for our biordered set $E$, we have the strongly regular Baer
semigroup $S$ with its biordered set isomorphic with $E$
and lattice of principal left ideals isomorphic with $\llat E$, so
that by the result cited above, $E(\llat E)$ is isomorphic
with $E$.

Also, since $E$ satisfies \ref{e41}, the members
$\gle(e_1),\gle(e_2),\dotsc,\gle(e_n)$ form an independent set in
$\llat E$, by Proposition~\ref{llatind}. Moreover, by the same result, if
$h=e_1\oplus e_2\oplus\dotsb\oplus e_n$, then
$\gle(e_1)\medvee\gle(e_2)\dotsb\gle(e_n)=\gle(h)$. Now $e_i\rel\om h$ for
each $i$, by Proposition~\ref{sumidalt} and so $h=1$, by \ref{e42}. Hence
$\gle(e_1)\medvee\gle(e_2)\dotsb \medvee \gle(e_n)=\gle(1)$. Also, \ref{e43}
implies that these members of $\llat E$ are in perspective, by
Proposition~\ref{llatpersp}. Thus this set is a \textit{homogeneous basis} of
$\llat E$, in the sense of \cite{vn}. Thus $\llat E$ is a
complemented modular lattice with a homogeneous basis of rank $n$ and
so if $n\ge 4$, then there exists a regular ring $R$ with the
lattice of principal left ideals of the matrix ring $R_n$
isomorphic with $\llat E$.(see Theorem 14.1 of \cite{vn})

Again in \cite{pa}, it is shown  (see Theorem 5 of \cite{pa}) that
if the lattice of principal left ideals of a regular ring is isomorphic
with $L$, then the biordered set of the ring is isomorphic
with $E(L)$. Hence in our case, the biordered set of idempotents of the
regular ring $R_n$ is isomorphic with $E(\llat E)$ which is
isomorphic with $E$. This proves our theorem.

\end{document}